\documentclass[11pt]{article}

\usepackage[a4paper, left=3cm,top=2.5cm,right=3cm,bottom=3.5cm]{geometry}
\setlength{\parskip}{0.5em} \setlength{\parindent}{0em}

\usepackage{cmbright}
\usepackage[T1]{fontenc}

\usepackage{graphicx}
\usepackage{mathtools}
\usepackage{psfrag}
\usepackage{amssymb}
\usepackage{amsmath}
\usepackage{authblk}
\usepackage{siunitx}

\usepackage{amsthm}
\usepackage{hyperref}
\usepackage{todonotes}

\usepackage{algorithm}
\usepackage{algorithmic}
\usepackage{subfig}

\usepackage{xspace}

\usepackage{enumitem}
\setitemize{label=\scriptsize{$\blacksquare$},topsep=0em}
\setenumerate{label=(\alph*), topsep=0em}

\newtheorem{theorem}{Theorem}

\newtheorem{proposition}[theorem]{Proposition}

\newtheorem{remark}[theorem]{Remark}
\newtheorem{example}[theorem]{Example}

\theoremstyle{definition}
\newtheorem{condition}[theorem]{Condition}
\newtheorem{definition}[theorem]{Definition}

\newcommand{\ph}{\varphi}

\newcommand{\R}{\mathbb R}

\newcommand{\N}{\mathbb N}
\newcommand{\sph}{\mathbb S}
\newcommand{\edot}{\,\cdot\,}
\newcommand{\trans}{\mathsf{T}}

\DeclareMathOperator*{\argmin}{arg\,min}

\newcommand{\la}{\lambda}

\newcommand{\ran}{\operatorname{Ran}}

\newcommand\abs[1]{\left\vert#1\right\vert}
\newcommand\sabs[1]{{\lvert#1\rvert}}
\newcommand\norm[1]{{\left\Vert#1\right\Vert}}
\newcommand\snorm[1]{\Vert#1\Vert}
\newcommand{\enorm}{\left\|\;\cdot\;\right\|}
\newcommand\set[1]{{\left\{#1\right\}}}
\newcommand{\kl}[1]{\left(#1\right)}

\newcommand\inner[2]{\left\langle#1,#2\right\rangle}
\newcommand\sinner[2]{\langle#1,#2\rangle}

\newcommand{\cm}{}

\makeatletter
\newcommand*\bigcdot{\mathpalette\bigcdot@{.6}}
\newcommand*\bigcdot@[2]{\mathbin{\vcenter{\hbox{\scalebox{#2}{$\m@th#1\bullet$}}}}}
\makeatother

\newcommand{\La}{\Lambda}
\newcommand{\bias}{b}
\newcommand{\signal}{x}
\newcommand{\zsignal}{z}

\newcommand{\signalP}{x_\Plus}

\newcommand{\source}{p_0}
\newcommand{\data}{y}
\newcommand{\noise}{\xi}

\newcommand{\XXX}{\mathbb X}
\newcommand{\XX}{X}
\newcommand{\YY}{Y}

\newcommand{\DD}{D}
\newcommand{\WW}{\mathcal V}

\newcommand{\modt}{\nu}

\newcommand{\wave}{\mathcal{F}}
\newcommand{\BP}{\mathbf{B}}
\newcommand{\samp}{\mathbf{S}}
\newcommand{\Fo}{\mathbf{F}}
\newcommand{\Ko}{K}

\newcommand{\nlo}{\sigma}
 \newcommand{\Ao}{\mathbb  A}
\newcommand{\Wo}{\mathbb V}
\newcommand{\WWe}{\mathbb  V}
\newcommand{\WWd}{\mathbb  W}

\newcommand{\breg}{\mathcal{B}}
\newcommand{\err}{\mathcal{E}}
\newcommand{\simm}{\mathcal{D}}
\newcommand{\reg}{\mathcal{R}}    \newcommand{\tik}{\mathcal{T}}
\newcommand{\fun}{\mathcal{F}}

\newcommand{\nlf}{\psi}
\newcommand{\NN}{\mathbold{\Phi}}
\newcommand{\NNe}{\mathbold{\Phi}}
\newcommand{\NNd}{\mathbold{\Psi}}

\newcommand{\sgn}{\operatorname{sign}}
\newcommand{\bgO}{\mathcal{O}}

\newcommand{\xxt}{x}
\newcommand{\zzt}{z}
\newcommand{\rrt}{r}
\newcommand{\Ntrain}{N}
\newcommand{\ntrain}{n}

\usepackage{soul}
\usepackage{xcolor}
\colorlet{lred}{red!40}
\colorlet{lgreen}{green!40}
\colorlet{lblue}{blue!40}
\colorlet{lviolet}{violet!40}

\def\Plus{\texttt{+}}

\numberwithin{equation}{section}
\numberwithin{figure}{section}
\numberwithin{theorem}{section}

\allowdisplaybreaks

\title{NETT: Solving  Inverse Problems with Deep Neural Networks}

\author[1]{Housen~Li}
\author[2]{Johannes~Schwab}
\author[2]{Stephan~Antholzer}
\author[2,$\star$]{Markus~Haltmeier}

\affil[1]{Institute for Mathematical Stochastics, University of G\"ottingen,
Goldschmidtstrasse 7, 37077 G\"ottingen, Germany\vspace{1em}}

\affil[2]{Department of Mathematics, University of Innsbruck\authorcr
Technikerstrasse 13, 6020 Innsbruck, Austria\vspace{1em}}

\affil[$\star$]{Correspondence: {\tt markus.haltmeier@uibk.ac.at}}

\date{ }

\begin{document}

\maketitle

\begin{abstract}
Recovering a function or high-dimensional parameter vector from indirect measurements is a central task in various scientific areas. Several methods for solving such inverse problems are well developed and well understood. Recently, novel algorithms using deep learning and neural networks for inverse problems appeared. While still in their infancy, these techniques show astonishing performance for applications like low-dose CT or various sparse data problems.
	However, {\cm there are few} theoretical results for deep learning in inverse problems.
In this paper, we establish  {\cm a complete convergence analysis} for the proposed NETT (Network Tikhonov) approach to inverse problems. NETT considers data consistent solutions having small value of a regularizer defined by a trained neural network.
    We derive well-posedness results and quantitative error estimates, and propose a possible strategy for training the regularizer. {\cm Our theoretical results and framework are different from any previous work using neural networks for solving inverse problems.}
A possible data driven regularizer is proposed. Numerical results are presented for a tomographic sparse data problem, which demonstrate good performance of NETT even for unknowns of different type from the training data.
{\cm To derive the convergence and convergence rates results we introduce a new framework based on the absolute Bregman distance  generalizing the  standard Bregman distance from the convex to the non-convex case.}

\medskip \noindent \textbf{Keywords:} inverse problems, deep learning, convergence analysis, image reconstruction, convolutional neural networks, non-linear $\ell^q$-regularization, total non-linearity, absolute Bregman distance, convergence rates.

\medskip \noindent \textbf{AMS subject classifications:}
65J20, 65J22, 45F05
\end{abstract}

\section{Introduction}
\label{sec:intro}

We study the stable solution of inverse problems of the form
\begin{equation}\label{eq:ip}
	\text{Estimate $ \signal \in \DD $ from data } \quad
	\data_\delta = \Fo( \signal )   + \noise_\delta    \,.
\end{equation}
Here $\Fo \colon \DD  \subseteq \XX \to \YY$  is a  possibly non-linear operator
between reflexive Banach spaces $(\XX, \enorm)$ and $(\YY, \enorm)$ with domain $\DD$. We allow a possibly  infinite-dimensional
function space  setting, but clearly
the approach and results apply to a finite dimensional setting as well.
The element $\noise_\delta \in \YY$ models the unknown data error (noise)
which is assumed to satisfy the estimate   $\snorm{\noise_\delta } \leq \delta $
for some  noise level $\delta  \geq 0$.   We focus on the ill-posed
(or ill-conditioned) case where without additional  information,
the solution of \eqref{eq:ip} is either highly unstable,
highly underdetermined, or both.
Many inverse problems in biomedical imaging,
geophysics, engineering sciences, or elsewhere  can be written in such a form (see, for example,  \cite{engl1996regularization,natterer01mathematical,scherzer2009variational}).
For its stable solution one has to employ regularization methods,  which are based on
approximating~\eqref{eq:ip} by neighboring well-posed problems that enforce stability and  uniqueness.

\subsection{NETT regularization}

Any method  for the stable solution of \eqref{eq:ip} uses, either implicitly or explicitly,
a-priori information about  the unknowns to be recovered. Such
information can be that  $\signal$ belongs to a certain  set
of admissible elements    or that $\signal$ has small value of a regularizer (or regularization functional) $\reg \colon \XX \to [0, \infty]$.  In this paper we focus on the latter situation, and assume that   the regularizer takes the  form
\begin{equation} 	\label{eq:cnn}
\forall \signal \in \XX \colon
	\quad
	\reg(\signal)
	=
	\reg(\Wo,\signal)
	\coloneqq
	\nlf(\NN(\Wo,\signal)) \,.
\end{equation}
Here $\nlf \colon  \XXX_L \to [0, \infty]$  is a scalar
functional and $\NN(\Wo, \edot) \colon \XX \to \XXX_L$   a  neural  network of depth $L$ where
$\Wo \in \WW$, for some vector space $\WW$, contains free parameters that can be adjusted to available  training data (see Section~\ref{sec:framework}  for a precise formulation).

With the regularizer \eqref{eq:cnn}, we approach~\eqref{eq:ip} via
\begin{equation}\label{eq:nett}
 \tik_{\alpha;\data_\delta}(\signal)\coloneqq\simm(\Fo (\signal), \data_\delta)
 + \alpha \reg(\Wo,\signal) \to \min_{\signal \in \DD} \,,
\end{equation}
where  $\simm \colon \YY \times \YY \to [0, \infty] $  is an appropriate
similarity measure in the data space enforcing data consistency.
One may take $\simm(\Fo (\signal), \data_\delta) = \snorm{\Fo (\signal) - \data_\delta}^2$  but  also other similarity  measures such as the Kullback-Leibler divergence
 (which, among others, is used    in emission tomography)
 are reasonable choices.
Optimization problem  \eqref{eq:nett} can  be seen as a particular instance of generalized
Tikhonov regularization for solving \eqref{eq:ip} with a neural network as regularizer.
We therefore name  \eqref{eq:nett}   network Tikhonov (NETT)
approach for  inverse problems.

In this paper, we show that under reasonable assumptions, the NETT approach \eqref{eq:nett}
is stably solvable.  As $\delta \to 0$, the regularized solutions
 $ \signal_{\alpha,\delta}  \in \argmin_{\signal} \tik_{\alpha;\data_\delta}(\signal)   $
are shown to  converge   to $\reg(\Wo, \edot)$-minimizing solutions of  $\Fo( \signal )   =  \data_0$.
 Here and below $\reg(\Wo, \edot)$-minimizing solutions of $\Fo( \signal )   =  \data_0$
 are defined as any element
 \begin{equation}\label{eq:RminW}
	\signalP \in \argmin\set{\reg(\Wo,\signal) \mid \signal \in \DD \wedge
\Fo( \signal )   =  \data_0 } \,.
\end{equation}
Additionally, we derive convergence rates (quantitative error estimates)
between   $\reg(\Wo, \edot)$-minimizing solutions $\signalP$ and regularized solutions $\signal_{\alpha,\delta}$.
As a consequence,  \eqref{eq:nett} provides a  stable solution scheme
for \eqref{eq:ip} using  data consistency and encoding a-priori knowledge via neural networks.
For proving norm convergence and convergence rates, we introduce the absolute Bregman distance as a new generalization of the  standard Bregman distance for non-convex regularization.

\subsection{Possible regularizers}

The network regularizer $\reg(\Wo, \edot)$ can either be user-specified,
or a  trained network, where  free parameters are adjusted on appropriate
training data. Some examples are as follows.

\begin{itemize}[wide]
\item \textbf{\cm Non-linear $\ell^q$-regularizer:}
A simple user-specified instance of the regularizer \eqref{eq:cnn} is  the
 convex $\ell^q$-regularizer $\reg(\Wo,\signal) = \sum_{\la \in \La}
v_\la \abs{ \inner{\signal}{\ph_\la}}^q$. Here $(\ph_\la)_{\la \in \La}$ is a prescribed
basis or frame  and $(v_\la)_{\la \in \La}$ are weights.
In this case, the neural  network is simply given by the analysis operator
$\NN(\Wo, \edot) \colon \XX \to \ell^2(\La) \colon
\signal \mapsto \inner{\signal}{\ph_\la} $ and  NETT regularization
reduces to sparse $\ell^q$-regularization \cite{daubechies2004iterative,grasmair2008sparse,grasmair2011necessary,lorenz2008convergence,ramlau2006tikhinov}.
This form of the  regularizer can also be combined with a training procedure
by adjusting the weights $(v_\la)_{\la \in \La}$ to a class of training data.

In this paper  we study  a {\cm non-linear extension of
$\ell^q$-regularization}, where the {\cm (in general) non-convex network regularizer}  takes the form
\begin{equation}\label{eq:rq}
 \reg(\Wo,\signal) = \sum_{\la \in \La}
   v_\la \abs{ \NN_\la(\Wo,\signal)}^q \,,
\end{equation}
with  $q \geq 1$ and $\NN(\Wo, \edot) = (\NN_\la(\Wo, \edot))_{\la \in \La}$ being a possible
non-linear neural network with multiple layers. In Section~\ref{sec:ellq} we present convergence results
for this {\cm non-linear} generalization  of  $\ell^q$-regularization.
	{\cm By selecting non-negative weights, one can easily construct networks that are convex
with respect to the inputs~\cite{amos2017input}. In this work, however, we consider the general situation
of arbitrary weights, in which the network regularizer~\eqref{eq:rq} can be non-convex.}

\item \textbf{CNN  regularizer:}
The  network regularizer  $\reg(\Wo, \edot)$  in \eqref{eq:cnn}  may also be defined by  a
convolutional neural network (CNN) $\NN(\Wo, \edot)$,
containing   free parameters  that can be  adjusted on
	appropriate training data.  The CNN  can be  trained in such  a way, that the regularizer has small  value for  elements $\signal$ in a {\cm set of training phantoms}
	and larger  value  on a class of  un-desirable phantoms.
{\cm The class of un-desirable phantoms can be elements
containing undersampling artifacts, noise, or both}.  In Section~\ref{sec:num},
we present a possible regularizer design  together with a strategy for
training  the CNN {\cm to remove undersampling artifacts}. We present
numerical results demonstrating that our approach performs well in practice for a sparse tomographic data problem.
\end{itemize}

\subsection{Comparison to previous work}

Very recently, several deep learning  approaches for  inverse problems have been developed (see\footnote{We initially submitted our paper a recognized journal in February 28, 2018. On June 18, 2019, we have been informed that the paper is rejected. Since so much work has been done in the emerging field of deep learning in inverse problems, for the present version, we did not update the reference with all interesting papers, but only closely related work.}, for example, \cite{adler2017solving,antholzer2017deep,chen2017lowdose,han2016deep,jin2017deep,kelly2017deep,kofler2018u,schlemper2017deep,wang2016perspective,wang2016accelerating,wurfl2016deep,zhang2016image}).
In all these approaches, a trained network $\NN_{\rm rec}(\Wo, \edot) \colon \YY \to \XX$  maps measured data to the desired output image.

{\cm Two-step reconstruction networks} take the form $\NN_{\rm rec}(\Wo, \edot) = \NN_{\rm CNN}(\Wo, \edot) \circ \BP$, where  $\BP \colon \YY \to \XX$ maps the data to the reconstruction space (backprojection; no free parameters)  and $\NN_{\rm CNN}(\Wo, \edot)    \colon \XX \to \XX$ is a neural network, for example a convolutional neural network (CNN),  whose free parameters are  adjusted to the training data. This basic form allows the use of well established CNNs for image reconstruction  \cite{goodfellow2016deep} and
already demonstrates impressing results.
Network cascades~\cite{kofler2018u,schlemper2017deep} and trained iterative schemes~\cite{adler2017solving,aggarwal2018modl,hammernik2018learning,rick2017one}
learn free parameters  in iterative schemes.
In  such approaches, the reconstruction network  can be written
in the form
\begin{equation*}
\NN_{\rm rec}(\Wo, \data)
= \kl{ \NN_N (\Wo_N, \edot)
\circ  \BP_N(\data, \edot)
\circ \cdots  \circ \NN_1 (\Wo_1, \edot)  \circ  \BP_1 (\data, \edot)}(\signal_0) \,,
\end{equation*}
where  $\signal_0$ is the initial guess,
$\NN_{k}(\Wo_k, \edot)    \colon \XX \to \XX$ are CNNs  that can be
trained,  and  $\BP_k(\data, \edot) \colon \XX \to \XX $  are iterative
updates  based on the forward operator and the data.
The iterative updates may be defined by a
gradient step with respect to the given inverse problem. The  free parameters
are adjusted to available  training  data.

Network cascades and trained iterative schemes repeatedly make use
of the  forward problem  which might yield increased data
consistency compared to  the first class of methods.
Nevertheless,  in existing approaches, no provable non-trivial
estimates bounding the data consistency term
$\simm (\Fo(\signal) ,\data)$
are available; data consistency  can only be  guaranteed for the training data
$(\Fo(\zzt_\ntrain), \zzt_\ntrain )_{\ntrain=1}^{\Ntrain}$ for  which  the parameters
in the neural network are optimized.
 This may results in instability and  degraded reconstruction
 quality if the unknown  to be recovered is not similar  enough to the
 class of employed training data.
The proposed NETT bounds  the data consistency term
$\simm (\Fo(\signal_{\alpha, \delta}) ,\data)$ also for data
outside the training set.    We expect the  combination of the forward problem
and a  neural network via  \eqref{eq:nett} (or, for the noiseless case, \eqref{eq:RminW}) to increase
reconstruction quality, especially  in the case of limited access
to a large amount  of appropriate training data.

Note, further, that the formulation of NETT~\eqref{eq:nett} separates the noise characteristic and the a-priori information of unknowns. This allows us to incorporate the knowledge of data generating mechanism, e.g. Poisson noise or Gaussian noise, by choosing the corresponding log-likelihood as the data consistency term, and also simplifies the training process of $\reg(\Wo, \edot)$, as it to some extend avoids the impact of noise. Meanwhile, this enhances the interpretability of the resulting approach: we on the one hand require its fidelity to the data, and on the other penalize unfavorable features (e.g. artifacts in tomography).

{\cm
    An early related work~\cite{romano2017little} uses denoisers as a regularization term which also includes certain CNNs. In~\cite{aggarwal2018modl}, they use a residual network for $\NN$ and $\nlf(\cdot)=\norm{\edot}_2^2$.
    Another related work~\cite{rick2017one} uses a learned proximal operator instead of a regularization term.
    After the present paper was initially submitted, other works explored the idea of neural networks as regularizers.
    In particular, in \cite{Lunz2018} a regularizer has been proposed that distinguishes the distributions of desired images and noisy images. We note that neither convergence nor convergence rates results have been derived by any
    work using neural networks as regularizer.}

{\cm
The results in this paper are a main step for the regularization of inversion problems with neural networks.
For the first time, we present a complete convergence analysis and derive convergence rate under reasonable
assumptions.}  Many additional issues can be addressed in future work.
This  includes the design of appropriate CNN regularizers,
the development of efficient algorithms for minimizing \eqref{eq:nett},  and the consideration of
other regularization strategies for \eqref{eq:RminW}.  The  focus of the present
paper is on the theoretical  analysis of NETT and demonstrating the
feasibility  of our approach;  detailed  comparison with other methods
in terms of reconstruction quality, computational performance  and
applicability  to real-world  data is beyond our scope here and will be addressed in future work.

\subsection{Outline}

The rest of this paper is organized as follows.
In Section~\ref{sec:nett}, we describe the proposed NETT framework for solving inverse problems. We  show  its stability and derive convergence in the weak topology
(see  Theorem~\ref{thm:well}).    To  obtain  the strong convergence of
NETT, we  introduce a new notion of total  non-linearity  of  non-convex functionals.
For totally non-linear regularizers, we  show norm convergence of NETT
(see Theorem~\ref{thm:strong}).   Convergence rates (quantitative error estimates)  for NETT are derived in Section~\ref{sec:rates}.  Among others, we
derive  a convergence rate result in terms of the absolute Bregman
distance (see Proposition \ref{prop:cs}).   A framework  for  learning the data driven regularizer is
proposed in Section~\ref{sec:auto}, and applied to a sparse data problem in photoacoustic  tomography in Section~\ref{sec:num}.    The paper concludes with a short summary and outlook presented in
Section~\ref{sec:conclusion}.

\section{NETT regularization}
\label{sec:nett}

In the section we introduce the  proposed NETT
and analyze its well-posedness
(existence, stability and weak convergence). We introduce the novel concepts of  absolute Bregman distance and
total non-linearity, which are applied to  establish convergence of NETT with respect to  the norm.

\subsection{The NETT framework}
\label{sec:framework}

Our goal is to solve \eqref{eq:ip} with $\norm{\noise_\delta} \leq \delta$ and $\delta>0$.  For that
purpose we consider minimizing the NETT functional \eqref{eq:nett},
where the regularizer $\reg(\Wo, \edot) \colon \XX  \to [0, \infty]$ in \eqref{eq:cnn} is defined by a neural network of the form
\begin{equation} \label{eq:cnn2}
	\NN(\Wo,\signal)
	\coloneqq
	(\nlo_{L}  \circ \Wo_L \circ \nlo_{L-1} \circ \Wo_{L-1} \circ  \cdots \circ \nlo_1 \circ \Wo_1) (\signal) \,.
\end{equation}
Here $L$ is the depth of the network (the number of layers after the input layer) and
$\Wo_\ell (\signal)= \Ao_\ell (\signal) +  \bias_\ell$
are  affine linear operators between Banach spaces $\XXX_{\ell-1}$ and $\XXX_{\ell-1/2}$;
we take  $\XXX_0  \coloneqq  \XX $. The operators $\Ao_\ell \colon  \XXX_{\ell-1} \to \XXX_{\ell-1/2}$
are the linear parts and  $\bias_\ell \in \XXX_{\ell-1/2}$  the  so-called bias terms. The operators   $\nlo_\ell \colon \XXX_{\ell-1/2} \to \XXX_\ell$ are possibly non-linear and the functionals $\nlf \colon \XXX_L \to [0, \infty]$  are possibly  non-convex. Note that we use two different  spaces $\XXX_{\ell-1}$ and $\XXX_{\ell-1/2}$
in each  layer  because common operations in networks
like max-pooling, downsampling or upsampling change the domain space.

As common in machine  learning, the affine mappings $\Wo_\ell$  depend on free parameters    that can be adjusted in the training phase, whereas  the non-linearities   $\nlo_\ell$ are fixed. Therefore $\Wo_\ell $ and $\nlo_\ell$ are treated separately
and only the affine  part $\Wo =(\Wo_\ell)_{\ell=1}^L $ is indicated in the notion of
the neural network regularizer $\reg(\Wo, \edot)$.
Throughout our  theoretical analysis  we assume
$\reg(\Wo, \edot)$ to be given and all free parameters to be trained before the
minimization of~\eqref{eq:nett}. In Section~\ref{sec:auto}, we present a possible framework
for  training a neural network regularizer.

\begin{remark}[CNNs in Banach space setting]
A\label{rem:cnn} typical instance for the neural network in NETT \eqref{eq:cnn},
is a deep convolutional neural network (CNN). In a possible infinite dimensional setting,  such CNNs can be written in the form \eqref{eq:cnn2},
where the involved spaces satisfy   $\XXX_\ell  \coloneqq  \ell^p(\Lambda_\ell,  \XX_\ell)$ and
$\XXX_{\ell-1/2}  \coloneqq  \ell^p(\Lambda_\ell,  \XX_{\ell-1/2})$ with $p \geq 1$,
$\XX_\ell$ and $\XX_{\ell-1/2}$ being function spaces, and $\Lambda_\ell$
being an at most countable set that specifies the number of different filters
(depth) of the $\ell$-th layer.
The   linear operators $\Ao_\ell \colon \ell^p(\Lambda_{\ell-1},  \XX_{\ell-1})
\to \ell^p(\Lambda_\ell,  \XX_{\ell-1/2})$ are taken as
\begin{equation} \label{eq:nccc}
\forall \signal \in \ell^p(\Lambda_{\ell-1},  \XX_{\ell-1})
\; \forall \ell  \in \Lambda_\ell
\colon \quad
\Ao_\ell (\signal) =
\kl{ \sum_{\mu \in \La_{\ell-1}} \Ko_{\la, \mu}^{(\ell)} (\signal_\mu)}_{\la \in \Lambda_\ell}  \,,
\end{equation}
where $\Ko_{\la, \mu}^{(\ell)}   \colon  \XX_{\ell-1} \to \XX_{\ell-1/2}$ are
convolution operators.

We point out, that in the existing machine learning literature,
only finite dimensional settings have been considered so far, where
$ \XXX_\ell$ and $ \XXX_{\ell-1/2}$ are finite dimensional spaces. In such a finite dimensional setting,
we can take  $\XX_\ell=  \R^{N_1^{\ell} \times N_2^{\ell}}$,   and   $\Lambda_\ell$
as a set with $N_c^{\ell}$ elements.
 One then can identify $ \XXX_\ell = \ell^p(\Lambda_\ell,  \XX_\ell) \simeq \R^{N_1^{\ell} \times N_2^{\ell} \times N_c^{\ell} }$
 and interpreted its elements  as stack of discrete images (the same holds for $ \XXX_{\ell-1/2}$).
In typical  CNNs, either the dimensions $N_1^{\ell} \times N_2^{\ell}$ of the base space
$\XX_\ell$ are progressively reduced  and number of channels $N^{\ell}_c$ increased,  or
vice versa.
While we are not aware of any infinite dimensional general formulation of CNNs,
our proposed  formulation \eqref{eq:cnn2}, \eqref{eq:nccc} is the natural infinite-dimensional
Banach space version of  CNNs, which  reduces to standard CNNs \cite{goodfellow2016deep}
in the finite  dimensional setting.
\end{remark}

Basic convex  regularizers
are sparse $\ell^q$-penalties $\reg(\Wo,\signal)  =  \sum_{\la \in \La} v_\la \abs{ \inner{\ph_\la}{\signal}}^q$. In this case  one may take  \eqref{eq:cnn2} as a single-layer neural network
with $\XXX_1 = \ell^2( \Lambda_1, \R)$,
$\nlo = \operatorname{Id}$ and $\NN(\Wo,\signal) = \Wo (\signal) = (\inner{\ph_\la}{\signal})_\la$
being the analysis operator to some  frame  $(\ph_\la)_{\la \in \La}$.
The functional $\nlf(\signal) =  \sum_{\la \in \La} v_\la \abs{\signal_\la}^q$ is a
weighted  $\ell^q$-norm. The frame $(\ph_\la)_{\la \in \La}$ may be a  prescribed wavelet or curvelet
basis \cite{candes2004,daubechies1988orthogonal,mallat2009wavelet}
or a  trained dictionary \cite{aharon2006ksvd,gribonval2010dictionary}.
In Section~\ref{sec:ellq}, we analyze a {\cm non-linear} version of  $\ell^q$-regularization,
where  $\inner{\phi_\la}{\edot}$ are replaced by non-linear functionals.
{\cm In this case the resulting regularizer will in general be non-convex even if $q \geq 1$.}

\subsection{Well-posedness and weak convergence}
\label{sec:well}

For the convergence analysis of NETT regularization, we use the following
assumptions on the regularizer  and the
data consistency term in \eqref{eq:nett}.

\begin{condition}[Convergence of NETT regularization]\label{cond:main} \mbox{}
\begin{enumerate}[leftmargin=3em,label = (A\arabic*)]
 \item \label{cond:main1} \textbf{Network regularizer $\reg$:}
 \begin{itemize}[wide]
\item
The regularizer  $\reg(\Wo, \edot)$ is defined by
 \eqref{eq:cnn} and \eqref{eq:cnn2}

\item $\Wo_\ell\colon \XXX_{\ell-1} \to \XXX_{\ell-1/2}$ are affine operators of the form
$\Wo_\ell (\signal)  =  \Ao_\ell \signal + \bias_\ell$;

\item
$\Ao_\ell$ are bounded linear;
\item
$\nlo_\ell$ are weakly continuous;
\item
The functional $\nlf$ is weakly lower semi-continuous.
\end{itemize}

\item \label{cond:main2}
 \textbf{Data  consistency term $\simm$:}
 \begin{itemize}[wide]
\item
For some $\tau \geq 1$ we have $\forall  \data_0,\, \data_1,\, \data_2 \in \YY
\colon
 \simm(\data_0, \data_1) \le \tau\simm(\data_0, \data_2) + \tau\simm(\data_2, \data_1)$;

\item
$\forall  \data_0,\, \data_1  \in \YY \colon \simm(\data_0, \data_1) = 0 \iff \data_0 = \data_1$;

\item
$\forall(\data_k)_{k \in \N} \in \YY^{\N} \colon \data_k \to \data \implies \simm(\data_k, \data) \to 0$;
\item
The functional $(\signal, \data) \mapsto \simm(\Fo(\signal), \data)$ is sequentially lower semi-continuous.
\end{itemize}

{\cm
\item \label{cond:coercive1}
 \textbf{Coercivity condition:}\\
$\reg(\Wo, \edot)$ is coercive, that is $\reg(\Wo, \signal) \to \infty$
as $\norm{\signal} \to \infty$.
}
\end{enumerate}
\end{condition}

{ \cm
The   conditions in  \ref{cond:main1}  guarantees the  lower semicontinuity of the regularizer.
The conditions in \ref{cond:main2} for the data consistency term are  not very restrictive  and, for example, are satisfied for the squared norm distance.
The coercivity condition  \ref{cond:coercive1}    might be the  most  restrictive condition. Several strategies how it can be obtained  are
discussed in the following. }

{ \cm
\begin{remark}[Coercivity via skip or residual connections]  \label{rem:c-skip}
Coercivity  \ref{cond:coercive1}  clearly holds for regularizers of the form
\begin{equation} \label{eq:c-skip}
	 \reg(\Wo, \edot) =  \reg^{(1)}(\Wo, \edot) +   \nlf^{(2)}(\signal)  \,,
\end{equation}
where $\reg^{(1)}(\Wo, \edot)$  is a trained regularizer as in \ref{cond:main1}
and $\nlf^{(2)}$  is  coercive and weakly lower semi-continuous.
The regularizer \eqref{eq:c-skip} fits to our general framework and results from
a network using a skip connection between the input and the output layer. In this case, the overall network takes the form $\NN(\Wo,\signal) = [\NN^{(1)}(\Wo,\signal), \signal]$ where  $\NN^{(1)}(\Wo,\signal)$ is of the form \eqref{eq:cnn2}.

Another possibility  to obtain coercivity is to use a residual connection
in the network structure which results  in a regularizer of the  form
\begin{equation} \label{eq:c-res}
    \reg(\Wo, \signal) = \nlf \bigl( \NN^{(r)}(\Wo,\signal) - \signal  \bigr) \,.
 \end{equation}
 If the last non-linearity $\nlo_\ell$  in the network
 $\NN^{(r)}(\Wo,\signal)$ is a bounded function and the functional
 $\nlf$ is coercive, then the resulting regularizer is coercive.
  Coercivity also holds if $\NN^{(r)}(\Wo,\signal)$ has Lipschitz constant $<1$, which
  can be achieved by appropriate training~\cite{behrmann2019invertible}.
\end{remark}
}

{ \cm
\begin{remark}[Layer-wise coercivity]  \label{rem:c-each}
A set of specific conditions  that implies coercivity of the regularizer  is
to assume that, for all $\ell$, the activation functions $\nlo_\ell$
are coercive and there exists $c_\ell \in [0, \infty)$ such that
$\forall \signal \in \XX \colon \norm{\signal} \le c_\ell \norm{\Ao_\ell \signal}$.
The coercivity of $\Ao_\ell $ can be obtained by including a skip  connection,
in which case the operator  $\Ao_\ell$ takes the form $\Ao_\ell (\signal) = [\Ao_{\ell}^{(1)} (\signal), \signal] $,
where $\Ao_\ell$ is bounded linear.

In CNNs, the spaces  $\XXX_\ell$  and $\XXX_{\ell-1/2}$  are function spaces  (see Remark~\ref{rem:cnn})
and a standard operation for $\nlo_\ell$ is the ReLU (the rectified linear unit),  $\operatorname{ReLU} (x) \coloneqq
\max \set{x,0}$, that is applied component-wise.
The  plain form of the ReLU   is not coercive. However, the slight modification $x \mapsto \max\set{x, ax}$
for  some $a \in (0,1)$, named leaky ReLU, is coercive,  see~\cite{maas2013rectifier,he2015delving}.
Another coercive standard operation for $\nlo_\ell$ in CNNs  is max pooling which takes the maximum value $\max\set{ \abs{x(i)} \colon i \in I_k}$  within clusters of transform coefficients.  We emphasize however that by using  one of
the strategies described  in Remark \ref{rem:c-skip}, one can use any common activation function without worrying about its coercivity.
\end{remark}
}

{\cm
\begin{remark}[Generalization of the coercivity condition]\label{rem:c4}
 The results derived  below also hold under the following weaker alternative
 to the coercivity condition  \ref{cond:coercive1}  in Condition \ref{cond:main}:
\begin{enumerate}[leftmargin=3em,label = (A\arabic*')] \setcounter{enumi}{2}
\item \label{cond:coercive2}
For all $ \data \in \YY$ and  $\alpha > 0$, there exists a  $C>0$ such that
\begin{equation}\label{eq:lset}
\set{\signal \in \XX \mid \simm(\Fo (\signal), \data)
 + \alpha \reg(\Wo,\signal) \le C} \quad \text{ is nonempty and bounded  in  $\XX$.}
\end{equation}
\end{enumerate}
Condition \ref{cond:coercive2} ensures that the level set in \eqref{eq:lset} is sequentially weakly pre-compact  for all $\data \in \YY$ and $\alpha > 0$. It is indeed weaker than Condition \ref{cond:coercive1}. For instance, in case that $\Fo$ is linear and $\simm(\edot, 0)$ is convex, \ref{cond:coercive2}  amounts to require that $\reg(\Wo, \edot)$ is coercive on the null space of $\Fo$, whereas  \ref{cond:coercive1}  requires coercivity of $\reg(\Wo, \edot)$   on the whole space  $\XX$.
\end{remark}
}

\begin{theorem}[Well-posedness of CNN-regularization]\label{thm:well}
Let Condition~\ref{cond:main} be satisfied.
Then the following assertions hold true:
\begin{enumerate}
\item
\emph{Existence:} For all $\data \in \YY$ and $\alpha >0$, there
exists a minimizer of $\tik_{\alpha;\data}$;
\item
\emph{Stability:} If $\data_k \to \data$ and $\signal_k \in \argmin  \tik_{\alpha; \data_k}$,
then weak accumulation points of $(\signal_k)_{k \in \N}$ exist and are minimizers of $\tik_{\alpha;\data}$.
\item\label{th:Tikhc}
\emph{Convergence:}  Let $\signal  \in \XX$, $\data \coloneqq \Fo(\signal)$,
 $(\data_k)_{k \in \N}$ satisfy $\simm(\data_k,\data), \simm(\data, \data_k) \le \delta_k$ for some sequence $(\delta_k)_{k\in \N} \in (0, \infty)^\N$ with
  $\delta_k \to 0$,  suppose $\signal_k \in \argmin_\signal\tik_{\alpha(\delta_k)}(\signal, \data_k)$, and let the  parameter choice $\alpha \colon (0, \infty) \to (0, \infty) $ satisfy
\begin{equation} \label{eq:alpha}
\lim_{\delta \to 0} \alpha(\delta) = \lim_{\delta \to 0} \frac{\delta}{\alpha(\delta)} = 0 \,.
\end{equation}
Then the following holds:
\begin{itemize}[wide]
\item
Weak accumulation points of $(\signal_k)_{k \in \N}$ are $\reg(\Wo, \edot)$-minimizing solutions of
$\Fo(\signal) = \data$;
\item
$(\signal_k)_{k \in \N}$ has at least one weak  accumulation point $\signalP$;
\item
Any weakly convergent subsequence $(\signal_{k(n)})_{n \in \N}$
satisfies  $\reg(\Wo, \signal_{k(n)}) \to \reg(\Wo,\signalP)$;
\item
If the $\reg(\Wo, \edot)$-minimizing solution of~$\Fo(\signal) = \data$ is unique, then $\signal_k \rightharpoonup \signalP$.
\end{itemize}
\end{enumerate}
\end{theorem}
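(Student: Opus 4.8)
The plan is to follow the classical direct-method template for generalized Tikhonov regularization, the one genuinely new ingredient being a structural lemma that transfers weak lower semicontinuity and coercivity from the individual building blocks assumed in~\ref{cond:main1} to the composite regularizer $\reg(\Wo,\edot)$. First I would prove this lemma. For coercivity, suppose $\norm{\signal}\to\infty$; the bound $\norm{\signal}\le c_\ell\norm{\Ao_\ell\signal}$ shows each linear part is bounded below, so $\norm{\Wo_1(\signal)}\ge\norm{\Ao_1\signal}-\norm{\bias_1}\to\infty$, and since each $\nlo_\ell$ is coercive this blow-up propagates layer by layer to give $\norm{\NN(\Wo,\signal)}\to\infty$; coercivity of $\nlf$ then yields $\reg(\Wo,\signal)\to\infty$. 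For weak lower semicontinuity, take $\signal_k\rightharpoonup\signal$; each bounded affine $\Wo_\ell$ is weak-to-weak continuous and each $\nlo_\ell$ is weakly continuous, so $\NN(\Wo,\signal_k)\rightharpoonup\NN(\Wo,\signal)$, and lower semicontinuity of $\nlf$ gives $\reg(\Wo,\signal)\le\liminf_k\reg(\Wo,\signal_k)$. Together with $\simm\ge0$ this makes $\tik_{\alpha;\data}$ coercive and weakly sequentially lower semicontinuous.

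With this lemma the first two assertions are routine. For existence I would take a minimizing sequence for $\tik_{\alpha;\data}$, use coercivity to bound it, extract a weakly convergent subsequence, and invoke weak lower semicontinuity of $\tik_{\alpha;\data}$ to identify the weak limit as a minimizer. For stability, given $\data_k\to\data$ and $\signal_k\in\argmin\tik_{\alpha;\data_k}$, I would compare $\tik_{\alpha;\data_k}(\signal_k)\le\tik_{\alpha;\data_k}(\tilde\signal)$ for a fixed reference $\tilde\signal$ of finite value; the quasi-triangle inequality of~\ref{cond:main2} together with the diagonal continuity $\simm(\data,\data_k)\to0$ bounds the right-hand side, so $\reg(\Wo,\signal_k)$ is bounded and $(\signal_k)$ admits weak accumulation points. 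Any such limit $\signal^*$ is then shown to minimize $\tik_{\alpha;\data}$ by combining the joint lower semicontinuity of $(\signal,\data)\mapsto\simm(\Fo(\signal),\data)$, weak lower semicontinuity of $\reg$, and optimality of $\signal_k$, passing to the limit in the comparison inequality.

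For the convergence assertion (part~\ref{th:Tikhc}) I would run the standard a-priori estimate. Fixing a $\reg(\Wo,\edot)$-minimizing solution $\signalP$ of $\Fo(\signal)=\data$ (whose existence follows from coercivity, weak lower semicontinuity of $\reg$, and weak closedness of the solution set, itself a consequence of the joint lower semicontinuity of the data term), optimality of $\signal_k$ gives $\simm(\Fo(\signal_k),\data_k)+\alpha(\delta_k)\reg(\Wo,\signal_k)\le\delta_k+\alpha(\delta_k)\reg(\Wo,\signalP)$, since $\simm(\Fo(\signalP),\data_k)=\simm(\data,\data_k)\le\delta_k$. Dividing shows $\simm(\Fo(\signal_k),\data_k)\to0$, and the quasi-triangle inequality with $\simm(\data_k,\data)\le\delta_k$ upgrades this to $\simm(\Fo(\signal_k),\data)\to0$; using $\delta_k/\alpha(\delta_k)\to0$ from \eqref{eq:alpha} gives $\limsup_k\reg(\Wo,\signal_k)\le\reg(\Wo,\signalP)$. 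Coercivity bounds $(\signal_k)$, so a weak accumulation point $\signal^+$ exists; joint lower semicontinuity forces $\simm(\Fo(\signal^+),\data)\le\liminf_j\simm(\Fo(\signal_{k_j}),\data)=0$, i.e. $\signal^+$ solves the equation, and weak lower semicontinuity of $\reg$ with the $\limsup$ bound shows $\reg(\Wo,\signal^+)\le\reg(\Wo,\signalP)$, so $\signal^+$ is $\reg(\Wo,\edot)$-minimizing. Applying this to every weakly convergent subsequence yields $\reg(\Wo,\signal_{k(n)})\to\reg(\Wo,\signalP)$, and the usual subsequence-of-subsequence argument upgrades this to $\signal_k\rightharpoonup\signalP$ when the $\reg(\Wo,\edot)$-minimizing solution is unique.

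The main obstacle I anticipate is the structural lemma rather than the limiting arguments: because the network is genuinely nonlinear and $\nlf$ non-convex, weak lower semicontinuity of $\reg(\Wo,\edot)$ is not automatic and must be assembled from the precise hypotheses (bounded-below linear parts, weakly continuous coercive activations, lower semicontinuous coercive $\nlf$), with the propagation of coercivity through the composition being the most delicate point. The quasi-triangle constant $\tau\ge1$ must also be used with care, but it enters constructively --- converting $\simm(\Fo(\signal_k),\data_k)\to0$ into $\simm(\Fo(\signal_k),\data)\to0$ --- rather than obstructing the identification of exact minimizers.
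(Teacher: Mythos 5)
Your proposal is correct and follows essentially the same route as the paper: the paper's proof reduces Theorem~\ref{thm:well} to verifying exactly your structural lemma --- that $\reg(\Wo,\edot)$ is coercive (hence sublevel sets of $\tik_{\alpha;\data}$ are weakly sequentially pre-compact via Banach--Alaoglu) and weakly sequentially lower semi-continuous --- and then invokes the general Tikhonov theory of \cite{Gra10,scherzer2009variational}, which contains precisely the direct-method existence, stability, and a-priori-estimate arguments you carry out by hand. The only difference is one of self-containedness, not method: you spell out the layer-by-layer propagation of coercivity and the limiting arguments explicitly, where the paper asserts them in one line or delegates them to the cited references.
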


\begin{proof}
According to \cite{Gra10,scherzer2009variational} it is sufficient to show that
the  functional $\reg(\Wo, \edot)$ is weakly sequentially lower semi-continuous and the
set $\set{\signal \mid \tik_{\alpha;\data}(\signal) \le t}$ is sequentially weakly
pre-compact  for all $t>0$ and $\data \in \YY$ and $\alpha > 0$.
By the Banach-Alaoglu theorem, the latter condition is satisfied if $\reg(\Wo, \edot)$ is coercive. The coercivity  of $\reg(\Wo, \edot)$  however is assumed
Condition~\ref{cond:main} (for sufficient coercivity conditions see Remarks~\ref{rem:c-skip} and \ref{rem:c-each}). Also from
Condition~\ref{cond:main} it follows that
  $\reg(\Wo, \edot)$ is sequentially lower semi-continuous.
\end{proof}

{\cm Note that the   convergence and stability results of  Theorem~\ref{thm:well}
are valid for any test data independent  of the training data used for optimizing the network regularizer. Clearly, if the considered inverse problem is positive weights, a
$\reg(\Wo, \edot)$-minimizing solutions is not necessarily the one corresponding to the desired signal class for test data very different from this class.}

\subsection{Absolute Bregman distance and total non-linearity}

For convex regularizers, the notion of Bregman
distance  is a powerful concept \cite{burger2004convergence,scherzer2009variational}. For non-convex  regularizers, the standard definition of the Bregman distance
takes negative values. In this paper, we therefore use the  notion of  absolute
Bregman distance. To the best of our knowledge, the absolute
Bregman distance  has not been used in regularization theory  so far.

\begin{definition}[Absolute Bregman distance] \label{def:bregman}
Let $\fun  \colon  \DD \subseteq \XX \to \R$
be G\^ateaux differentiable at $\signal \in \XX$.
The \emph{absolute Bregman distance}
$\breg_{\fun}(\edot, \signal) \colon  \DD  \to [0, \infty]$
with respect to $\fun$ at  $\signal$ is defined by
\begin{equation}\label{eq:abreg} % absolute Bregman distance
\forall \tilde\signal \in \XX \colon \quad
\breg_{\fun}(\tilde\signal, \signal) \coloneqq \abs{\fun(\tilde\signal) - \fun(\signal) - \fun'(\signal)(\tilde\signal - \signal)}.
\end{equation}
Here $\fun'(\signal)$ denotes the G\^ateaux derivative of $\fun$ at $\signal$.
\end{definition}

From Theorem~\ref{thm:well}  we can conclude convergence of $\signal_{\alpha,\delta}$
to the exact solution in the absolute Bregman distance. Below we show that this
implies strong convergence under some additional assumption on the regularization
functional. For this   purpose we introduce the new total non-linearity,
which has not been studied before.

\begin{definition}[Total non-linearity]\label{def:total}
Let $\fun \colon  \DD \subseteq \XX \to \R$
be G\^ateaux differentiable at $\signal \in  \DD$.  We define the \emph{modulus of total non-linearity} of $\fun$ at  $\signal$ as $\modt_{\fun}(\signal, \cdot): [0,\infty) \to [0, \infty]$,
\begin{equation}\label{eq:modulus}
	\forall t > 0 \colon \quad
	\modt_{\fun}(x, t) \coloneqq \inf\set{\breg_{\fun} (\tilde\signal, \signal)\mid \tilde\signal \in \DD \wedge \norm{\tilde\signal - \signal} = t}
	\,.
\end{equation}
The function $\fun$ is called \emph{totally non-linear} at $\signal$ if $\modt_{\fun}(\signal, t) > 0$ for all $t \in (0, \infty).$
\end{definition}

The notion of total non-linearity is similar to total convexity~\cite{BuIu00} for convex functionals.
Opposed to total convexity we do not assume convexity
of $\fun$, and use the absolute Bregman distance instead of the standard Bregman distance.
For convex functions,  the total non-linearity
 reduces to total convexity, as the Bregman distance
is always non-negative for convex functionals.
For a G\^ateaux differentiable convex function, the total non-linearity essentially requires that its second derivative at $\signal$ is bounded away from zero. The functional $\fun(\signal) \coloneqq \sum_{\lambda \in \Lambda} v_\lambda \abs{\signal_\lambda}^q$ with $v_\lambda > 0$ is totally non-linear at every $\signal = (\signal_\lambda)_{\lambda \in \Lambda} \in \ell^\infty(\Lambda)$ if $q > 1$.

We have the following result, which generalizes~\cite[Proposition 2.2]{Res04}
(see also \cite[Theorem 3.49]{scherzer2009variational}) from the convex to
the non-convex case.

\begin{proposition}[Characterization of total non-linearity]\label{prop:tn}
For  $\fun \colon  \DD \subseteq \XX \to \R$
and any $\signal \in \DD$ the following
assertions are equivalent:
\begin{enumerate}[label = (\roman*)]
\item\label{prop:tn1}
The function $\fun$ is totally non-linear at $\signal$;
\item\label{prop:tn2}
$\forall \kl{\signal_n}_{n \in \N}\subseteq \DD^{\N}
\colon
(\lim_{n \to \infty} \breg_{\fun}(\signal_n, \signal) = 0  \wedge (\signal_n)_n \text{ bounded} ) \Rightarrow  \lim_{n \to \infty} \norm{\signal_n - \signal} = 0$.
\end{enumerate}
\end{proposition}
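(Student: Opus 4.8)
The plan is to prove the two implications separately, driven entirely by one elementary inequality. Writing $t \coloneqq \norm{\tilde\signal - \signal}$, every admissible competitor $\tilde\signal \neq \signal$ enters the infimum defining $\modt_\fun(\signal, t)$, so
\begin{equation*}
\breg_\fun(\tilde\signal, \signal) \ge \modt_\fun(\signal, \norm{\tilde\signal - \signal}) \qquad \text{for all } \tilde\signal \in \DD \text{ with } \tilde\signal \neq \signal.
\end{equation*}
This is the only structural fact I expect to need, and both directions are read off from it.

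For \ref{prop:tn2} $\Rightarrow$ \ref{prop:tn1} I would argue by contraposition, and this direction should be routine. If $\fun$ is not totally nonlinear at $\signal$, then by Definition~\ref{def:total} there is a radius $t_0 > 0$ with $\modt_\fun(\signal, t_0) = 0$. Since this modulus is an infimum equal to zero, I may select a sequence $(\signal_n)_{n\in\N} \subseteq \DD$ lying on the sphere $\norm{\signal_n - \signal} = t_0$ with $\breg_\fun(\signal_n, \signal) \to 0$. This sequence then satisfies $\breg_\fun(\signal_n, \signal) \to 0$ while $\norm{\signal_n - \signal} = t_0 \not\to 0$, which is exactly the negation of \ref{prop:tn2}.

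For the converse \ref{prop:tn1} $\Rightarrow$ \ref{prop:tn2} I would again argue by contraposition. Suppose \ref{prop:tn2} fails: there is a sequence with $\breg_\fun(\signal_n, \signal) \to 0$ but $\norm{\signal_n - \signal} \not\to 0$, so after passing to a subsequence I may assume $t_n \coloneqq \norm{\signal_n - \signal} \ge \eps$ for some fixed $\eps > 0$. The elementary inequality then forces $\modt_\fun(\signal, t_n) \to 0$. To contradict total nonlinearity I must exhibit a \emph{single} radius at which the modulus vanishes, and here lies the difficulty: the radii $t_n$ move, so the decay of $\modt_\fun(\signal, t_n)$ cannot be transferred to a fixed radius without an additional comparison. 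The natural remedy is a monotonicity of the modulus, in analogy with the convex theory of total convexity where $t \mapsto \modt_\fun(\signal, t)/t$ is nondecreasing and yields $\modt_\fun(\signal, t_n) \ge \modt_\fun(\signal, \eps) > 0$ whenever $t_n \ge \eps$. Concretely, I would compare $\signal_n$ with its radial projection $\signal + \eps(\signal_n - \signal)/t_n$ onto the sphere of radius $\eps$, hoping that the absolute Bregman distance does not increase under this contraction, so that $\modt_\fun(\signal, \eps) \le \breg_\fun(\signal_n, \signal) \to 0$ and hence $\modt_\fun(\signal, \eps) = 0$, contradicting \ref{prop:tn1}.

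This radial comparison is the step I expect to be the main obstacle. For convex $\fun$ the Bregman distance is monotone along rays emanating from $\signal$ (it equals a convex one-dimensional profile vanishing to first order at the origin), which is precisely what makes the convex argument of \cite{Res04,scherzer2009variational} succeed; once convexity is dropped this monotonicity can genuinely fail. The real work therefore lies in producing the uniform lower bound $\inf_{t \ge \eps} \modt_\fun(\signal, t) > 0$ from mere pointwise positivity of the modulus — the non-convex substitute for the monotonicity of $\modt_\fun(\signal, \cdot)/\mathrm{id}$ — presumably by exploiting the total-nonlinearity hypothesis directly or by confining the competitor radii to a bounded window, and I would expect this to require the most care in a complete proof.
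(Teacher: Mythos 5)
Your direction \ref{prop:tn2} $\Rightarrow$ \ref{prop:tn1} is correct and coincides with the paper's, which simply defers to \cite[Proposition 2.2]{Res04}: the vanishing infimum $\modt_{\fun}(\signal,t_0)=0$ yields a minimizing sequence on the sphere of radius $t_0$, negating \ref{prop:tn2}. The genuine gap is in \ref{prop:tn1} $\Rightarrow$ \ref{prop:tn2}. You correctly identify the moving radii as the crux, but your proposed fix --- radially contracting $\signal_n$ onto a sphere of \emph{fixed} radius $\eps$ and hoping the absolute Bregman distance does not increase --- fails for exactly the reason you yourself state (no monotonicity along rays once convexity is dropped), and you then leave the step open, conjecturing that a complete proof must establish the uniform bound $\inf_{t\ge\eps}\modt_{\fun}(\signal,t)>0$. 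That conjecture misdiagnoses what is needed: no uniform-in-$t$ bound and no radial comparison enter the paper's argument at all.

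The paper's resolution is a subsequence-plus-continuity argument. Since $t_n\coloneqq\norm{\signal_n-\signal}$ is a sequence of nonnegative reals, it suffices (arguing along subsequences) to treat the case $t_n\to\delta$ for some fixed $\delta>0$, and the contradiction then only has to be produced at this \emph{single, a posteriori chosen} radius $\delta$. Because $t_n\to\delta$, the perturbation needed to place a competitor exactly on the sphere $\norm{\tilde\signal-\signal}=\delta$ is vanishingly small, so continuity of $\breg_{\fun}(\edot,\signal)$ --- not monotonicity --- yields, for any $\varepsilon>0$ and all sufficiently large $n$, points $\tilde\signal_n$ with $\norm{\tilde\signal_n-\signal}=\delta$ and
\begin{equation*}
\modt_{\fun}(\signal,\delta)\;\le\;\breg_{\fun}(\tilde\signal_n,\signal)\;\le\;\breg_{\fun}(\signal_n,\signal)+\tfrac{\varepsilon}{2}\;\le\;\varepsilon\,,
\end{equation*}
whence $\modt_{\fun}(\signal,\delta)=0$, contradicting \ref{prop:tn1}. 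In short: do not fix the target radius in advance; extract it as a limit of the $t_n$, which converts the global uniformity problem you flag into a purely local continuity statement. (Two caveats that apply equally to the paper's own terse proof: it implicitly assumes continuity of $\breg_{\fun}(\edot,\signal)$, and it silently ignores subsequences with $t_n\to\infty$, for which no convergent subsequence of radii exists; neither affects the comparison above, but your attempt would need the same hypotheses.)
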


\begin{proof}
The proof of the implication \ref{prop:tn2} $\Rightarrow$ \ref{prop:tn1} is the same as~\cite[Proposition 2.2]{Res04}. For the implication \ref{prop:tn1} $\Rightarrow$  \ref{prop:tn2}, let \ref{prop:tn1} hold, let
$\kl{\signal_n}_{n \in \N}\subseteq \DD^{\N}$ satisfy $\breg_{\fun}(\signal_n, \signal) \to  0$,  and suppose $\lim_{n\to\infty} \norm{\signal_n-\signal} = \delta > 0$ for the moment. For any $\varepsilon>0$, by the continuity of $\breg_{\fun}(\edot, \signal)$, there exist $\tilde\signal_n$ with $\norm{\tilde\signal_n - \signal} = \delta$ such that for sufficiently large $n$
\[
\varepsilon \ge \breg_{\fun}(\signal_n, \signal) + \frac{\varepsilon}{2} \ge \breg_{\fun}(\tilde\signal_n, \signal) \ge \modt_{\fun}(\signal, \delta).
\]
This leads to $\modt_{\fun}(\signal, \delta) = 0$, which contradicts with the total non-linearity
of $\fun$ at $x$. Then, the assertion follows by considering subsequences of $\kl{\signal_n}_{n\in\N}$.
\end{proof}

\begin{remark}
We point out that Proposition~\ref{prop:tn} remains true, if we replace the absolute value $\abs{\edot} \colon  \R \to [0, \infty]$ in~\eqref{eq:abreg} by the
ReLU function, the leaky ReLU function,
or any other nonnegative continuous function $\phi \colon  \R \to [0, \infty]$ that satisfies
$\phi(0) = 0$.
\end{remark}

\subsection{Strong convergence of NETT regularization}
\label{sec:string}

For totally non-linear regularizers $\reg(\Wo, \edot)$
we can prove  convergence  of NETT with respect to the  norm topology.

\begin{theorem}[Strong convergence of NETT]\label{thm:strong}
Let Condition~\ref{cond:main} hold and assume additionally
that $\Fo(\signal) = \data$ has a solution,
$\reg(\Wo, \edot)$ is totally non-linear at $\reg(\Wo, \edot)$-minimizing solutions, and $\alpha$ satisfies \eqref{eq:alpha}.
Then for every sequence  $(\data_k)_{k \in \N}$ with
 $\simm(\data_k,\data), \simm(\data, \data_k) \leq \delta_k$ where $\delta_k \to 0$  and every sequence $\signal_k \in \argmin_\signal\tik_{\alpha(\delta_k)}(\signal, \data_k)$,
 there exist a subsequence $(\signal_{k(n)})_{n \in \N}$ and an $\reg(\Wo, \edot)$-minimizing solution $\signalP$
 with $\snorm{\signal_{k(n)} - \signalP} \to 0$.
If the $\reg(\Wo, \edot)$-minimizing solution is unique, then $\signal_k \to \signalP$
with respect to the norm topology.
\end{theorem}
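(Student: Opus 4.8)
The plan is to upgrade the weak convergence already supplied by Theorem~\ref{thm:well} to norm convergence by showing that the absolute Bregman distance to the limit vanishes along the relevant subsequence, and then invoking the total-nonlinearity characterization of Proposition~\ref{prop:tn}. First I would apply the convergence part (item~\ref{th:Tikhc}) of Theorem~\ref{thm:well} to the sequence $(\signal_k)_{k\in\N}$: this produces at least one weak accumulation point $\signalP$, which is an $\reg(\Wo, \edot)$-minimizing solution of $\Fo(\signal) = \data$. Passing to a subsequence I obtain $\signal_{k(n)} \rightharpoonup \signalP$, and by the third bullet of that item the regularizer values converge, $\reg(\Wo, \signal_{k(n)}) \to \reg(\Wo, \signalP)$.

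Next I would control the absolute Bregman distance. Since $\signalP$ is an $\reg(\Wo, \edot)$-minimizing solution, the total-nonlinearity hypothesis presupposes that $\reg(\Wo, \edot)$ is G\^ateaux differentiable at $\signalP$; write $\reg'(\Wo, \signalP)$ for its derivative. By Definition~\ref{def:bregman},
\[
\breg_{\reg(\Wo, \edot)}(\signal_{k(n)}, \signalP)
= \abs{\reg(\Wo, \signal_{k(n)}) - \reg(\Wo, \signalP) - \reg'(\Wo, \signalP)(\signal_{k(n)} - \signalP)} \,.
\]
The value difference tends to $0$ by the previous step, and the linear term tends to $0$ because $\signal_{k(n)} - \signalP \rightharpoonup 0$ while $\reg'(\Wo, \signalP)$ is a bounded linear functional, so its pairing with a weakly null sequence vanishes. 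Hence $\breg_{\reg(\Wo, \edot)}(\signal_{k(n)}, \signalP) \to 0$, and Proposition~\ref{prop:tn} (implication \ref{prop:tn1} $\Rightarrow$ \ref{prop:tn2}), applied at $\signalP$, yields $\norm{\signal_{k(n)} - \signalP} \to 0$, which is the asserted strongly convergent subsequence. The only genuinely delicate point here, and the step I expect to be the main obstacle, is justifying that the linear term vanishes: this rests on the G\^ateaux derivative being a \emph{continuous} (bounded) linear functional, i.e.\ an element of $\XX^*$, so that weak convergence forces the pairing to zero. This continuity is implicit in the differentiability assumption underlying total nonlinearity, and everything else is a direct assembly of Theorem~\ref{thm:well} and Proposition~\ref{prop:tn}.

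Finally, for the uniqueness statement I would note that when the $\reg(\Wo, \edot)$-minimizing solution $\signalP$ is unique, the last bullet of Theorem~\ref{thm:well}(item~\ref{th:Tikhc}) gives $\signal_k \rightharpoonup \signalP$ for the \emph{whole} sequence; the third bullet then gives $\reg(\Wo, \signal_k) \to \reg(\Wo, \signalP)$ for the whole sequence, so the Bregman-distance computation above applies verbatim to $(\signal_k)_{k\in\N}$ rather than a subsequence, and $\norm{\signal_k - \signalP} \to 0$ follows. Equivalently, one may argue by the subsequence principle: every subsequence of $(\signal_k)_{k\in\N}$ admits, by the first two paragraphs, a further subsequence converging strongly to an $\reg(\Wo, \edot)$-minimizing solution, which by uniqueness must be $\signalP$, whence the full sequence converges strongly to $\signalP$.
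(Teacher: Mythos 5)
Your proof is correct and follows essentially the same route as the paper: extract via Theorem~\ref{thm:well} a weakly convergent subsequence with $\reg(\Wo, \signal_{k(n)}) \to \reg(\Wo,\signalP)$, deduce $\breg_{\reg(\Wo, \edot)}(\signal_{k(n)},\signalP) \to 0$, and conclude by Proposition~\ref{prop:tn}, with the uniqueness case handled the same way. You in fact make explicit a step the paper leaves implicit --- that the pairing $\reg'(\Wo, \signalP)(\signal_{k(n)} - \signalP)$ vanishes because the G\^ateaux derivative acts as a bounded linear functional on a weakly null sequence --- which is exactly the right justification.
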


\begin{proof}
It follows from Theorem~\ref{thm:well} that
there exists a subsequence $(\signal_{k(n)})_{n \in \N}$ weakly converging to some
$\reg(\Wo, \edot)$-minimizing solution
$\signalP$ such that $\reg(\Wo, \signal_{k(n)}) \to \reg(\Wo,\signalP)$.
From the weak convergence of $(\signal_{k(n)})_{n \in \N}$ and the convergence of
$(\reg(\Wo, \signal_{k(n)}))_{n \in \N}$ it follows that $\breg_{\reg(\Wo, \edot)}(\signal_{k(n)},\signalP) \to 0$.
Thus it follows from Proposition~\ref{prop:tn}, that $\snorm{\signal_{k(n)}-\signalP} \to 0$.
If $\signalP$ is the unique $\reg$-minimizing solution,
the strong convergence to $\signalP$ again follows from Theorem~\ref{thm:well} and Proposition~\ref{prop:tn}.
\end{proof}

\section{Convergence rates}
\label{sec:rates}

In this section, we derive convergence rates for NETT
in terms of general error measures under certain variational inequalities.
{\cm Throughout we denote by $\delta > 0$ the noise level and $\alpha > 0$ the regularization parameter.}
We discuss instances where the variational inequality is satisfied for
the absolute Bregman distance.
Additionally, we consider a {\cm non-linear} generalization
of $\ell^q$-regularization.

\subsection{General convergence rates result}

We study   convergence rates  in terms of a general
functional $\err \colon \XX \times \XX \to [0, \infty]$ measuring
closeness in the space $\XX$. For convex $\Psi \colon [0, \infty) \to [0, \infty)$, let
$\Psi^\ast \colon \R \to \R$ denote the Fenchel conjugate of $\Psi$
defined by $\Psi^*(\tau)  \coloneqq \sup \set{ \tau \, t - \Psi(t) \mid t  \geq 0 }$.

\begin{theorem}[Convergence rates for NETT]\label{thm:rate}
Suppose $\err \colon \XX \times \XX \to [0, \infty]$, let $\signalP \in \DD$ and assume  that
there exist    a concave, continuous and strictly increasing function $\Phi \colon [0, \infty) \to [0, \infty)$
with $\Phi(0) = 0$ and a constant $\beta >0$  such that
for all $\varepsilon > 0$ and  $\signal \in  \DD$ with $\abs{\reg(\Wo, \signal) - \reg(\Wo, \signalP)} < \varepsilon$ we have
\begin{equation}\label{eq:vin}
\beta \err(\signal, \signalP) \le \reg(\Wo, \signal) - \reg(\Wo, \signalP) + \Phi(\simm(\Fo(\signal), \Fo(\signalP))) \,.
\end{equation}
Additionally, let Condition~\ref{cond:main} hold, let $\data_\delta\in\YY$ and $\delta>0$ satisfy
 $\simm(\data, \data_\delta), \simm(\data_\delta, \data) \le \delta$
 and write  $\Phi^{-*}$ for the Fenchel conjugate of the inverse function $\Phi^{-1}$.
Then the following assertions hold true:

\begin{enumerate}
\item\label{thm:rate-a}  For sufficiently small $\alpha$ and $\delta$, we have
\begin{equation}\label{eq:rate1}
\forall  \signal_{\alpha,\delta} \in \argmin \tik_{\alpha; \data_\delta}  \colon
\quad
\beta \err(\signal_{\alpha,\delta}, \signalP) \le \frac{\delta}{\alpha} + \Phi(\tau\delta) + \frac{\Phi^{-*}(\tau\alpha)}{\tau\alpha} \,.
\end{equation}

\item\label{thm:rate-b} If $\alpha \sim \delta / \Phi(\tau\delta)$, then
$\err(\signal_{\alpha,\delta}, \signalP) = \bgO\left(\Phi(\tau\delta)\right)$
as $\delta \to 0$.
\end{enumerate}
\end{theorem}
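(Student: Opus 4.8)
The plan is to derive everything from the defining minimality of $\signal_{\alpha,\delta}$ and then feed the result into the variational inequality \eqref{eq:vin}, converting the fidelity surplus into the Fenchel term by a Young-type estimate. Throughout I abbreviate $S \coloneqq \simm(\Fo(\signal_{\alpha,\delta}), \data_\delta)$ and use that $\signalP$ is an exact solution, i.e. $\Fo(\signalP) = \data$, so that the noise condition reads $\simm(\Fo(\signalP),\data_\delta) = \simm(\data,\data_\delta) \le \delta$.

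First I would exploit minimality. Since $\signal_{\alpha,\delta} \in \argmin \tik_{\alpha;\data_\delta}$, comparing its value with that at $\signalP$ gives $S + \alpha\reg(\Wo,\signal_{\alpha,\delta}) \le \simm(\data,\data_\delta) + \alpha\reg(\Wo,\signalP) \le \delta + \alpha\reg(\Wo,\signalP)$. Rearranging yields the two facts I need: $\reg(\Wo,\signal_{\alpha,\delta}) - \reg(\Wo,\signalP) \le (\delta - S)/\alpha$ and, since $S\ge 0$, the cruder bound $\reg(\Wo,\signal_{\alpha,\delta}) - \reg(\Wo,\signalP) \le \delta/\alpha$. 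The latter shows the regularizer gap is small once $\delta/\alpha$ is small; together with the convergence statement of Theorem~\ref{thm:well}\ref{th:Tikhc} (which forces $\reg(\Wo,\signal_{\alpha,\delta})\to\reg(\Wo,\signalP)$ as the parameters shrink) this guarantees that for sufficiently small $\alpha$ and $\delta$ the minimizer $\signal_{\alpha,\delta}$ lies in the set $\set{\signal \mid \abs{\reg(\Wo,\signal)-\reg(\Wo,\signalP)}<\eps}$ on which \eqref{eq:vin} is available.

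Next I would insert $\signal = \signal_{\alpha,\delta}$ into the variational inequality \eqref{eq:vin} and substitute the regularizer bound, obtaining
\[
\beta\err(\signal_{\alpha,\delta},\signalP) \le \frac{\delta - S}{\alpha} + \Phi\bigl(\simm(\Fo(\signal_{\alpha,\delta}),\data)\bigr).
\]
Using the quasi-triangle inequality of Condition~\ref{cond:main}\ref{cond:main2} and monotonicity of $\Phi$, I bound $\simm(\Fo(\signal_{\alpha,\delta}),\data) \le \tau S + \tau\delta$, and since a concave $\Phi$ with $\Phi(0)=0$ is subadditive, $\Phi(\tau S + \tau\delta) \le \Phi(\tau S) + \Phi(\tau\delta)$. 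This isolates the term $\Phi(\tau S) - S/\alpha$, and the crux of the proof is to bound it uniformly in $S$. Writing $t = \tau S$ and recognizing the Fenchel structure, $\Phi(t) - t/(\tau\alpha) \le \sup_{t\ge 0}\ekl{\Phi(t) - t/(\tau\alpha)} = \Phi^{-*}(\tau\alpha)/(\tau\alpha)$, where the last identity follows by the substitution $s = \Phi(t)$ and the definition of the conjugate of the convex function $\Phi^{-1}$. Combining the three contributions gives exactly \eqref{eq:rate1}. I expect this Young/Fenchel step to be the main obstacle, as it is the one place where concavity, monotonicity and the conjugate must be orchestrated simultaneously.

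Finally, for part~\ref{thm:rate-b} I would insert the a-priori choice $\alpha \sim \delta/\Phi(\tau\delta)$ into \eqref{eq:rate1}. The first term becomes $\delta/\alpha \sim \Phi(\tau\delta)$ and the second is $\Phi(\tau\delta)$ itself, so it remains to show the conjugate term is also $\bgO(\Phi(\tau\delta))$. Using the identity $\Phi^{-*}(\tau\alpha)/(\tau\alpha) = \sup_{t\ge 0}\ekl{\Phi(t) - t/(\tau\alpha)}$ together with the tangent-line estimate $\Phi(t)\le\Phi(\tau\delta)+\Phi'(\tau\delta)(t-\tau\delta)$ valid for concave $\Phi$, I would check that for this choice of $\alpha$ the resulting supremum is controlled by a constant multiple of $\Phi(\tau\delta)$ (the chord slope $\Phi(\tau\delta)/(\tau\delta)$ dominating $\Phi'(\tau\delta)$ keeps the coefficient of $t$ in check). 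Hence all three terms in \eqref{eq:rate1} are $\bgO(\Phi(\tau\delta))$, and dividing by $\beta$ yields $\err(\signal_{\alpha,\delta},\signalP)=\bgO(\Phi(\tau\delta))$ as $\delta \to 0$.
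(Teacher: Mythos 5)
Your proposal is correct and takes essentially the same route as the paper's proof: comparing $\tik_{\alpha;\data_\delta}$ at $\signal_{\alpha,\delta}$ and at $\signalP$, inserting the resulting bound into \eqref{eq:vin}, splitting $\Phi$ via the quasi-triangle inequality and subadditivity of the concave $\Phi$, and absorbing $\Phi(\tau\simm(\Fo(\signal_{\alpha,\delta}),\data_\delta)) - \alpha^{-1}\simm(\Fo(\signal_{\alpha,\delta}),\data_\delta)$ through exactly the Young--Fenchel estimate $\alpha\Phi(\tau t)\le t+\tau^{-1}\Phi^{-\ast}(\tau\alpha)$ that the paper invokes. Your treatment of part (b) simply spells out the ``elementary computations'' the paper leaves implicit (namely $\limsup_{\delta\to 0}\Phi^{-\ast}\bigl(\tau\delta/\Phi(\tau\delta)\bigr)/\delta<\infty$, which follows from the chord-slope monotonicity of concave $\Phi$ that you use), so the two arguments coincide in substance.
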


\begin{proof}  \mbox{}
\ref{thm:rate-a} By Theorem~\ref{thm:well}~\ref{th:Tikhc}, we can  assume that $\sabs{\reg(\Wo, \edot)(\signal_{\alpha,\delta}) - \reg(\Wo, \signalP)} \le \varepsilon$.
From the  definition of $\signal_{\alpha,\delta}$ ite follows that $\simm(\Fo (\signal_{\alpha,\delta}), \data_\delta) + \alpha \reg(\Wo, \edot)(\signal_{\alpha,\delta}) \le \simm(\Fo (\signalP), \data_\delta) + \alpha \reg(\Wo, \signalP)$. Then from~\eqref{eq:vin} we obtain
\begin{align*}
\alpha\beta\err(\signal_{\alpha,\delta}, \signalP) & \le \delta - \simm(\Fo(\signal_{\alpha,\delta}), \data_\delta) + \alpha\Phi(\simm(\Fo(\signal_{\alpha,\delta}), \Fo(\signalP)))\\
& \le \delta - \simm(\Fo(\signal_{\alpha,\delta}), \data_\delta) + \alpha\Phi(\tau\delta+\tau\simm(\Fo(\signal_{\alpha,\delta}), \data_\delta)) \\
& \le \delta - \simm(\Fo(\signal_{\alpha,\delta}), \data_\delta) + \alpha\Phi(\tau\delta)+\alpha\Phi(\tau\simm(\Fo(\signal_{\alpha,\delta}), \data_\delta))\\
& \le \delta + \alpha\Phi(\tau\delta) + \tau^{-1}\Phi^{-*}(\tau\alpha).
\end{align*}
The last estimate is an application of Young's inequality
$\alpha\Phi(\tau t) \le t + \tau^{-1}\Phi^{-*}(\tau \alpha)$.

\ref{thm:rate-b}
Elementary computations show $\limsup_{\delta \to 0} \Phi^{-*}\bigl(\tau \delta /\Phi(\tau\delta)\bigr)/\delta < \infty$, such that
the right hand side of~\eqref{eq:rate1} is bounded by $\Phi(\tau\delta)$ up to a constant if $\alpha \sim \delta / \Phi(\tau\delta)$.
\end{proof}

\begin{remark}
If $\Phi(t) \le C t$ for sufficiently small $t$, then from~\eqref{eq:rate1} it follows that $\beta \err(\signal_{\alpha,\delta},\signal) \le \delta/\alpha + C \tau \delta = \bgO(\delta)$ if $\alpha \le 1/C$.
This says that the regularization parameter $\alpha$ needs not to vanish for $\delta \to 0$, which is often referred to
as  exact penalization (for the convex case, see the discussions in ~\cite{scherzer2009variational,burger2004convergence}).
\end{remark}

\subsection{Rates in the absolute Bregman distance}

We next derive  conditions under which a variational inequality in form of~\eqref{eq:vin} is possible for the absolute Bregman distance as error measure, $\err(\signal,\signalP) \coloneqq \breg_{\reg(\Wo, \edot)}(\signal, \signalP)$.

\begin{proposition}[Rates in the absolute Bregman distance] \label{prop:cs}
Let $\XX$ and $\YY$ be Hilbert spaces and let $\Fo \colon \XX \to \YY$ be a bounded linear operator. Assume that $\reg(\Wo, \edot)$ is G\^ateaux differentiable, that $\reg'(\Wo, \signalP) \in \ran(\Fo^*)$, and that there exist positive constants $\gamma$, $\varepsilon$ with
\begin{equation}\label{eq:tc}
\reg(\Wo, \signalP) - \reg(\Wo, \signal)\le \gamma\snorm{\Fo(\signal) - \Fo(\signalP)}
\end{equation}
for all $\signal$ satisfying $\sabs{\reg(\Wo, \signal) - \reg(\Wo, \signalP)} < \varepsilon$.
Then,
\[
\breg_{\reg(\Wo, \edot)}(\signal, \signalP) \le \reg(\Wo,\signal) - \reg(\Wo,\signalP) + C\snorm{\Fo (\signal) - \Fo(\signalP)}\qquad\text{for some constant }C.
\]
In particular, for the similarity measure   $\simm(\zsignal, \data) = \snorm{\zsignal - \data}^2$ and under  Condition~\ref{cond:main},
 Items \ref{thm:rate-a} and \ref{thm:rate-b} of Theorem~\ref{thm:rate} hold true.
\end{proposition}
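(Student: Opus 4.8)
The plan is to turn the source condition $\reg'(\Wo,\signalP)\in\ran(\Fo^*)$ into control of the linear term in the absolute Bregman distance, and then to dispatch the absolute value by a sign case-distinction in which the ``wrong-sign'' branch is absorbed by hypothesis~\eqref{eq:tc}.

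First I would choose $\eta\in\YY$ with $\reg'(\Wo,\signalP)=\Fo^*\eta$, as granted by the source condition. Since $\XX,\YY$ are Hilbert spaces, this lets me write the derivative contribution as $\reg'(\Wo,\signalP)(\signal-\signalP)=\sinner{\eta}{\Fo(\signal)-\Fo(\signalP)}$, whence Cauchy--Schwarz gives $\abs{\reg'(\Wo,\signalP)(\signal-\signalP)}\le\snorm{\eta}\,\snorm{\Fo(\signal)-\Fo(\signalP)}$. This is the step that trades the derivative term for the data misfit $\snorm{\Fo(\signal)-\Fo(\signalP)}$.

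Next I would abbreviate $a\coloneqq\reg(\Wo,\signal)-\reg(\Wo,\signalP)$ and $b\coloneqq\reg'(\Wo,\signalP)(\signal-\signalP)$, so that $\breg_{\reg(\Wo,\edot)}(\signal,\signalP)=\abs{a-b}$, and split on the sign of $a-b$. If $a-b\ge0$ then $\abs{a-b}=a-b\le a+\abs{b}$, and the Cauchy--Schwarz estimate already yields the claim. If $a-b<0$ then $\abs{a-b}=b-a=a+(b-2a)$; here I would bound $b\le\abs{b}$ as before and bound $-2a=2\bigl(\reg(\Wo,\signalP)-\reg(\Wo,\signal)\bigr)\le2\gamma\snorm{\Fo(\signal)-\Fo(\signalP)}$ by~\eqref{eq:tc}, which is applicable precisely on the admissible neighbourhood $\sabs{\reg(\Wo,\signal)-\reg(\Wo,\signalP)}<\varepsilon$. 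Collecting terms gives the asserted inequality with $C=\snorm{\eta}+2\gamma$.

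For the concluding claim I would set $\Phi(t)\coloneqq C\sqrt{t}$, which is concave, continuous, strictly increasing and vanishes at $0$. With $\simm(\zsignal,\data)=\snorm{\zsignal-\data}^2$ one has $\Phi(\simm(\Fo(\signal),\Fo(\signalP)))=C\snorm{\Fo(\signal)-\Fo(\signalP)}$, so the inequality just proved is exactly the variational inequality~\eqref{eq:vin} with $\beta=1$ and $\err=\breg_{\reg(\Wo,\edot)}$. It then remains to verify Condition~\ref{cond:main}~\ref{cond:main2} for the quadratic discrepancy: the quasi-triangle inequality holds with $\tau=2$ via $\snorm{u+v}^2\le2\snorm{u}^2+2\snorm{v}^2$, definiteness and the convergence property are immediate, and weak lower semi-continuity of $(\signal,\data)\mapsto\snorm{\Fo(\signal)-\data}^2$ follows from weak continuity of the bounded linear operator $\Fo$ together with weak lower semi-continuity of the norm. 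Items~\ref{thm:rate-a} and~\ref{thm:rate-b} of Theorem~\ref{thm:rate} then apply. I expect the main obstacle to be the absolute value in the non-convex Bregman distance: unlike the convex case $a-b$ may be negative, and that branch is uncontrolled without an extra assumption. The point of~\eqref{eq:tc} is to keep $\reg(\Wo,\signalP)-\reg(\Wo,\signal)$ below a multiple of the data misfit, which is exactly what tames the negative branch; one must also be careful to invoke~\eqref{eq:tc} only on the neighbourhood $\sabs{\reg(\Wo,\signal)-\reg(\Wo,\signalP)}<\varepsilon$, consistently with the restriction already built into~\eqref{eq:vin}.
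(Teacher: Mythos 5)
Your proof is correct and essentially identical to the paper's: both use the source condition with Cauchy--Schwarz to bound the derivative term by $\snorm{\eta}\,\snorm{\Fo(\signal)-\Fo(\signalP)}$, absorb the wrong-sign branch of the absolute value via \eqref{eq:tc} (valid on the neighbourhood $\sabs{\reg(\Wo,\signal)-\reg(\Wo,\signalP)}<\varepsilon$, as you correctly note) to arrive at $C=\snorm{\eta}+2\gamma$, and then invoke Theorem~\ref{thm:rate} with $\beta=1$ and $\Phi(t)=C\sqrt{t}$. The only cosmetic difference is that you case-split on the sign of the full expression $a-b$, whereas the paper first applies $\abs{a-b}\le\abs{a}+\abs{b}$ and splits on the sign of $a=\reg(\Wo,\signal)-\reg(\Wo,\signalP)$; the estimates and the resulting constant coincide.
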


\begin{proof}
Let $\xi$ satisfy that $\reg'(\Wo, \signalP) = \Fo^*\xi$. Then $\sabs{\sinner{\reg'(\Wo, \signalP)}{\signal - \signalP}} \le  \snorm{\xi}\snorm{\Fo(\signal) - \Fo(\signalP)}$. Note that
$\sabs{\reg(\Wo, \signal) - \reg(\Wo, \signalP)} = \reg(\Wo, \signal) - \reg(\Wo, \signalP) $ if
$ \reg(\Wo, \signal) \ge \reg(\Wo, \signalP)$ and
$
 \lvert \reg(\Wo, \signal) - \reg(\Wo, \signalP) \rvert=
\reg(\Wo, \signal) - \reg(\Wo, \signalP) + 2(\reg(\Wo, \signalP) - \reg(\Wo, \signal)) \le \reg(\Wo, \signal) - \reg(\Wo, \signalP) + 2\gamma\snorm{\Fo(\signal) - \Fo(\signalP)}
$ otherwise. This yields
\begin{multline*}
\breg_{\reg(\Wo, \edot)}(\signal, \signalP) \le \sabs{\reg(\Wo, \signal) - \reg(\Wo, \signalP)} +\sabs{\sinner{\reg'(\Wo, \signalP)}{\signal - \signalP}} \\
\leq \reg(\Wo, \signal) - \reg(\Wo, \signalP) + C\snorm{\Fo (\signal) - \Fo (\signalP)}
\end{multline*}
with the  constant $C \coloneqq \norm{\xi}+2\gamma$, and concludes the proof.
\end{proof}

\begin{remark}
Proposition~\ref{prop:cs} shows that a variational inequality of the form~\eqref{eq:vin} with $\beta = 1$ and $\Phi(t) = C\sqrt{t}$ follows from a classical source condition $\reg'(\signalP) \in \ran(\Fo^*)$. By Theorem~\ref{thm:rate}, it further implies that $\breg_{\reg(\Wo, \edot)}(\signal_{\alpha,\delta}, \signalP) = \mathcal{O}({\delta})$ if $\snorm{\data - \data_\delta} \le \delta$. Moreover, we point out that the additional assumption~\eqref{eq:tc} is rather weak, and follows from the classical source condition $\reg'(\signalP) \in \ran(\Fo^*)$  if $\reg$ is convex, see~\cite{grasmair2008sparse}. It is clear that a sufficient condition to~\eqref{eq:tc} is
\[
\sabs{\reg(\Wo, \signal) - \reg(\Wo, \signalP) - \sinner{\reg'(\Wo, \signalP)}{\signal - \signalP}} \le c \sabs{\reg(\Wo, \signal) - \reg(\Wo, \signalP)}\qquad \text{ for some } c < 1,
\]
which resembles a tangential-cone condition.
 {\cm
 Choosing the  squared Hilbert space norm  for the similarity measure,
 $\simm(\zsignal, \data) = \snorm{\zsignal - \data}^2$, the error estimate  \eqref{eq:rate1} takes  the  form
\begin{equation}\label{eq:rate1_bregman}
    \forall  \signal_{\alpha,\delta} \in \argmin \tik_{\alpha; \data_\delta}  \colon
    \quad
     \breg_{\reg(\Wo,\edot)}(\signal_{\alpha,\delta}, \signalP) \le \frac{\delta}{\alpha} + C\sqrt{\delta} + \frac{C^4}{4}\alpha \,.
\end{equation}
In particular,  choosing $\alpha \sim \sqrt{\delta}$ yields the convergence  rate
$\breg_{\reg(\Wo,\edot)}(\signal_{\alpha,\delta}, \signalP) =
\mathcal{O}(\sqrt{\delta})$.}
\end{remark}

\subsection{General regularizers}
\label{sec:generalizations}

So far we derived  well-posedness, convergence and convergence rates
for  regularizers  of the form \eqref{eq:cnn}.  These results
can be generalized to Tikhonov  regularization
 \begin{equation}\label{eq:tik}
\tik_{\alpha;\data_\delta}(\signal)\coloneqq
\simm (\Fo (\signal), \data_\delta) + \alpha \reg (\signal) \to \min_\signal \,,
\end{equation}
where the regularization term is not necessarily defined by a neural network.
These  results are derived by replacing
Condition~\ref{cond:main} with the following one.

\begin{condition}[Convergence for general regularizers]
\label{cond:gen} \mbox{}
\begin{enumerate}[leftmargin=3em,label = (B\arabic*)]
\item \label{cond:gen1}
The functional $\reg$ is sequentially lower semi-continuous.
\item \label{cond:gen2}
The set $\set{\signal \mid \tik_{\alpha;\data}(\signal) \le t}$ is sequentially pre-compact for all $t, \data$ and $\alpha > 0$.

\item \label{cond:gen3} The data consistency term satisfies \ref{cond:main2}.
\end{enumerate}
\end{condition}

Then we have the following:

\begin{theorem}[Results for general Tikhonov regularization] \label{thm:gen}
Under Condition~\ref{cond:gen},  general Tikhonov
regularization \eqref{eq:tik} satisfies the following:
\begin{enumerate}
\item\label{thm:gen-1} The conclusions from  Theorem \ref{thm:well}
(well-posedness and convergence) hold true.
\item\label{thm:gen-2} If $\reg$ is totally non-linear at $\reg$-minimizing  solutions, the strong  convergence from
Theorem~\ref{thm:strong} holds.
\item\label{thm:gen-3} The convergence rates result from  Theorem \ref{thm:rate} holds.

\item\label{thm:gen-4} If $\Fo$ is bounded linear,
the assertions of  Proposition~\ref{prop:cs} hold for $\reg$.

\end{enumerate}
\end{theorem}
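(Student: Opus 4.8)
The plan is to exploit that Condition~\ref{cond:main} was never used directly in the earlier proofs: it entered only through three abstract consequences --- the sequential lower semicontinuity of the regularizer, the sequential pre-compactness of the sublevel sets $\set{\signal \mid \tik_{\alpha;\data}(\signal) \le t}$, and the data-fidelity properties \ref{cond:main2}. Condition~\ref{cond:gen} postulates precisely these three facts for a general $\reg$: (B1) is lower semicontinuity, (B2) is pre-compactness of the sublevel sets, and (B3) is \ref{cond:main2} itself. I would therefore prove each item by re-running the matching earlier argument with $\reg$ in place of the network regularizer. For Item~\ref{thm:gen-1}, the proof of Theorem~\ref{thm:well} reduces, via \cite{Gra10,scherzer2009variational}, exactly to lower semicontinuity together with pre-compactness of the sublevel sets and the data axioms; these are (B1), (B2), (B3) verbatim, so well-posedness and convergence hold, with the weak topology replaced by whichever fixed topology makes (B2) true.

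Items~\ref{thm:gen-3} and~\ref{thm:gen-4} transfer even more directly, since their proofs never touch the network structure. The proof of Theorem~\ref{thm:rate} uses only minimality of $\signal_{\alpha,\delta}$, the variational inequality \eqref{eq:vin} (a hypothesis, not a consequence of the neural-network form), the quasi-triangle inequality for $\simm$ supplied by (B3), Young's inequality, and the fact --- now furnished by Item~\ref{thm:gen-1} --- that eventually $\sabs{\reg(\signal_{\alpha,\delta}) - \reg(\signalP)} \le \eps$; the whole chain of estimates therefore carries over unchanged. Likewise, the proof of Proposition~\ref{prop:cs} is a pure Hilbert-space computation from $\Fo$ bounded linear, G\^ateaux differentiability of $\reg$, the source condition $\reg'(\signalP) \in \ran(\Fo^*)$, the inequality \eqref{eq:tc}, and Cauchy--Schwarz, so it holds verbatim for general $\reg$ and, combined with Item~\ref{thm:gen-3}, yields the stated rate.

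For Item~\ref{thm:gen-2} I would follow the proof of Theorem~\ref{thm:strong}: extract from Item~\ref{thm:gen-1} a convergent subsequence $\signal_{k(n)} \to \signalP$ with $\reg(\signal_{k(n)}) \to \reg(\signalP)$, deduce $\breg_{\reg}(\signal_{k(n)}, \signalP) \to 0$, and invoke Proposition~\ref{prop:tn} --- whose proof is purely metric and independent of the form of $\reg$ --- to conclude $\snorm{\signal_{k(n)} - \signalP} \to 0$. The main obstacle, and the only step needing care, is showing that the linear term $\reg'(\signalP)(\signal_{k(n)} - \signalP)$ in the Bregman distance vanishes along the subsequence, and similarly that the lower-semicontinuity input of the generic Tikhonov theory in Item~\ref{thm:gen-1} is compatible with the abstract pre-compactness topology of (B2). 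Both are automatic under the natural reading that this topology is the weak topology of a reflexive Banach space, for then bounded linear functionals --- in particular $\reg'(\signalP) \in \XX^*$ --- are sequentially weakly continuous; with this understood, all four items reduce to bookkeeping and the earlier proofs transfer without change.
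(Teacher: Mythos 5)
Your proposal is correct and matches the paper's approach exactly: the paper's entire proof of Theorem~\ref{thm:gen} is the single sentence that all assertions are shown as in the special case $\reg = \reg(\Wo,\edot)$, and your item-by-item verification that Condition~\ref{cond:gen} supplies precisely the abstract inputs (lower semicontinuity, pre-compact sublevel sets, the data axioms) used in the earlier proofs is a faithful expansion of that remark. Your closing observation --- that the pre-compactness topology should be the weak topology of a reflexive space so that $\reg'(\signalP) \in \XX^*$ is sequentially weakly continuous in the Bregman-distance step --- is a genuine subtlety the paper leaves implicit, and you resolve it in the way the paper intends.
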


\begin{proof}
All assertions are shown as in the special case $\reg = \reg(\Wo, \edot)$.
\end{proof}

Note that Item~\ref{thm:gen-1} in the above theorem is contained
in \cite{Gra10}. Items~\ref{thm:gen-2}-\ref{thm:gen-4} have not been obtained  previously for non-convex regularizers.

\subsection{Non-linear $\ell^q$-regularization}
\label{sec:ellq}

We now analyze a special instance of  NETT regularization~\eqref{eq:cnn},
generalizing classical $\ell^q$-regularization {\cm by including non-linear transformations.}
More precisely, we consider the
following $\ell^q$-Tikhonov functional
\begin{equation} \label{eq:Lqreg}
\tik_{\alpha, \data_\delta} (\signal) = \snorm{\Fo(\signal) - \data_\delta}^2 + \alpha \sum_{\lambda \in \Lambda} v_\lambda \abs{\phi_\lambda(\signal)}^q \qquad \text{ with } q >1.
\end{equation}
Here $\Lambda$ is a countable set and $\phi_\lambda \colon \XX \to \R$ are possibly non-linear functionals.
{\cm Theorem \ref{thm:well} assures existence and convergence  of minimizers of \eqref{eq:Lqreg} provided that $(\phi_\lambda)_{\la \in \La}$ is coercive and weakly continuous. If $(\phi_\lambda)_{\la \in \La}$   is non-linear, minimizers are not
necessarily  unique.}

The  regularizer  $\reg(\signal) \coloneqq \sum_{\lambda \in \Lambda} v_\lambda \abs{\phi_\lambda(\signal)}^q$ is a particular instance of  NETT  \eqref{eq:cnn}, if we take
$\nlo_{L} \circ \Wo_L \circ  \cdots \circ \nlo_1 \circ \Wo_1 = (\phi_\lambda)_{\lambda \in \Lambda}  $, and  $\nlf$  as a weighted $\ell^q$-norm. However, in \eqref{eq:Lqreg} also more general choices for $\phi_\lambda$ are allowed (see Condition \ref{as:ex}).

We assume the following:

\begin{condition}[]\label{as:ex} \mbox{}
\begin{enumerate}[leftmargin=4em,label = (C\arabic*)]
\item \label{as:ex1}
$\Fo \colon \XX \to \YY$ is a bounded linear operator between Hilbert spaces $\XX$ and $\YY$.
\item \label{as:ex2}
$\phi_\lambda \colon \XX \to \R$ is G\^ateaux differentiable for every $\lambda \in \Lambda$.
\item \label{as:ex3}
There is a $\reg(\Wo, \edot)$-minimizing solution $\signalP$ with
$\reg'(\Wo,\signalP) \in \ran(\Fo^*)$.
\item \label{as:ex4}
There exist  constants $C, \varepsilon >0 $ such that for all $\signal$ with $\sabs{\reg(\Wo,\signal) - \reg(\Wo,\signalP)} \le \varepsilon$, it holds that
\[
\forall \lambda \in \Lambda \colon \quad
\sgn(\phi_\lambda(\signalP))(\phi_\lambda(\signalP) - \phi_\lambda(\signal)) \le C \sgn(\phi_\lambda(\signalP))\phi'_\lambda(\signalP)(\signalP - \signal)  \,.
\]
Here $\sgn(t) = 1$ for $t >0$, $\sgn(t) = 0$ for $t = 0$, and
$\sgn(t) =-1$ otherwise.
\end{enumerate}
\end{condition}

\begin{proposition}\label{prop:ex}
Let Condition~\ref{as:ex} hold, suppose that $\data_\delta \in \YY$ is such that $\norm{\data -\data_\delta} \le \delta$, and let
$\signal_{\alpha,\delta} \in \argmin \tik_{\alpha, \data_\delta}$.
If choosing $\alpha \sim \delta$, then $\breg_{\reg}(\signal_{\alpha,\delta}, \signalP) = \bgO(\delta).$
\end{proposition}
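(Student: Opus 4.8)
The plan is to deduce the claim from Proposition~\ref{prop:cs} by verifying its hypotheses under Condition~\ref{as:ex}. Assumption~\ref{as:ex1} supplies the Hilbert space setting and the bounded linearity of $\Fo$. Since $q > 1$, the scalar map $t \mapsto \abs{t}^q$ is continuously differentiable with derivative $q\abs{t}^{q-1}\sgn(t)$, so by \ref{as:ex2} each summand $\abs{\phi_\lambda(\edot)}^q$ is G\^ateaux differentiable and (granting term-by-term differentiation of the series) so is $\reg$, with
\[
\reg'(\signalP)(\signalP - \signal) = \sum_{\lambda \in \La} v_\lambda\, q \abs{\phi_\lambda(\signalP)}^{q-1}\sgn(\phi_\lambda(\signalP))\,\phi'_\lambda(\signalP)(\signalP - \signal)\,.
\]
The source condition $\reg'(\Wo,\signalP) \in \ran(\Fo^*)$ is precisely \ref{as:ex3}. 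Hence the only substantial point left is to establish the inequality~\eqref{eq:tc}.

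To obtain~\eqref{eq:tc}, I would exploit the convexity of $t \mapsto \abs{t}^q$ for $q \ge 1$, which gives the termwise subgradient bound
\[
\abs{\phi_\lambda(\signalP)}^q - \abs{\phi_\lambda(\signal)}^q \le q\abs{\phi_\lambda(\signalP)}^{q-1}\sgn(\phi_\lambda(\signalP))\bigl(\phi_\lambda(\signalP) - \phi_\lambda(\signal)\bigr)\,.
\]
For $\signal$ with $\sabs{\reg(\signal) - \reg(\signalP)} \le \varepsilon$, the weight $v_\lambda\, q \abs{\phi_\lambda(\signalP)}^{q-1}$ is nonnegative, so multiplying assumption~\ref{as:ex4} by it and summing over $\lambda$ preserves the inequality and yields
\[
\reg(\signalP) - \reg(\signal) = \sum_{\lambda \in \La} v_\lambda\bigl(\abs{\phi_\lambda(\signalP)}^q - \abs{\phi_\lambda(\signal)}^q\bigr) \le C\,\reg'(\signalP)(\signalP - \signal)\,.
\]
Writing $\reg'(\Wo,\signalP) = \Fo^*\xi$ via~\ref{as:ex3} and applying Cauchy--Schwarz gives $\reg'(\signalP)(\signalP - \signal) = \sinner{\xi}{\Fo(\signalP) - \Fo(\signal)} \le \norm{\xi}\,\snorm{\Fo(\signal) - \Fo(\signalP)}$, so~\eqref{eq:tc} holds with $\gamma \coloneqq C\norm{\xi}$.

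With~\eqref{eq:tc} secured, Proposition~\ref{prop:cs} applies and furnishes a variational inequality of the form~\eqref{eq:vin} with $\beta = 1$ and $\Phi(t) = C'\sqrt{t}$ for the error measure $\err = \breg_{\reg}$. For the similarity measure $\simm(\zsignal,\data) = \snorm{\zsignal - \data}^2$ the noise bound $\norm{\data - \data_\delta} \le \delta$ translates into $\simm(\data,\data_\delta) \le \delta^2$, so the effective noise level entering Theorem~\ref{thm:rate} is $\delta^2$. Consequently $\Phi(\tau\delta^2) \sim \delta$, and the balancing choice $\alpha \sim \delta^2/\Phi(\tau\delta^2) \sim \delta$ yields the rate $\breg_{\reg}(\signal_{\alpha,\delta},\signalP) = \bgO(\delta)$ by Theorem~\ref{thm:rate}\ref{thm:rate-b}.

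The main obstacle I anticipate is the justification of term-by-term G\^ateaux differentiation of the (possibly infinite) series defining $\reg$, that is, interchanging the sum with the directional derivative to obtain the displayed formula for $\reg'(\signalP)$; this requires controlling the convergence of $\sum_{\lambda} v_\lambda\, q\abs{\phi_\lambda(\signalP)}^{q-1}\sgn(\phi_\lambda(\signalP))\phi'_\lambda(\signalP)$, for instance through a dominated-convergence argument. Everything else is a verification of the hypotheses of Proposition~\ref{prop:cs} together with the elementary convexity estimate, and is routine once differentiability of $\reg$ is in place.
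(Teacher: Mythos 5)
Your proposal is correct and follows essentially the same route as the paper's proof: the convexity of $t\mapsto\sabs{t}^q$ combined with \ref{as:ex4} and the source condition \ref{as:ex3} yields \eqref{eq:tc}, after which Proposition~\ref{prop:cs} and Theorem~\ref{thm:rate} deliver the rate. If anything, you are more careful than the paper, which suppresses the factor $q$ in the subgradient bound, takes the term-by-term G\^ateaux differentiation of $\reg$ for granted, and leaves implicit the bookkeeping $\simm(\data,\data_\delta)\le\delta^2$ under which $\Phi(\tau\delta^2)\sim\delta$ and $\alpha\sim\delta$ is the balancing choice.
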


\begin{proof}
The convexity of $t \mapsto \sabs{t}^q$ implies that
\begin{align*}
\reg(\signalP) - \reg(\signal) \le \sum_{\lambda \in \Lambda} v_\lambda \sabs{\phi_\lambda(\signalP)}^{q-1}\sgn(\phi_\lambda(\signalP))(\phi_\lambda(\signalP) - \phi_\lambda(\signal)).
\end{align*}
By~\ref{as:ex4}, we obtain $\reg(\signalP) - \reg(\signal) \le C \sinner{\reg'(\signalP)}{\signalP - \signal}$. This together with~\ref{as:ex3} implies~\eqref{eq:tc}. Thus, the assertion follows from Proposition~\ref{thm:gen}.
\end{proof}

\begin{remark}
Consider the case that $\phi_\lambda(\signal) \coloneqq \sinner{\varphi_\lambda}{\signal}$ for an orthonormal basis $(\varphi_\lambda)_{\lambda \in \Lambda}$ of $\XX$. It is known that $\norm{\signal- \signalP}^2 = \bgO\bigl(\breg_{\reg(\Wo, \edot)}(\signal, \signalP)\bigr)$, see e.g.~\cite{scherzer2009variational}.  Then, Proposition~\ref{prop:ex} gives $\norm{\signal_{\alpha,\delta} - \signalP} = \bgO(\delta^{1/2})$, which reproduces the result of~\cite[Theorem 3.54]{scherzer2009variational}. This rate can be improved to $\bgO(\delta^{1/q})$ if we further assume sparsity of $\signalP$ and restricted injectivity of $\Fo$. It can be shown by Theorem~\ref{thm:rate} because in such a situation~\eqref{eq:vin} holds with $\Phi(t) \sim \sqrt{t}$ and $\err(\signal,\signalP) = \snorm{\signal - \signalP}^q$, see~\cite{grasmair2008sparse} for details.
\end{remark}

{\cm
\subsection{Comparison to the $W$-Bregman distance}

A different framework for deriving convergence rates
for non-convex regularization functionals is
based on the $W$-Bregman distance introduced in~\cite{Gra10,GraHalSch11b}.  In this subsection we compare our absolute
Bregman distance with the $W$-Bregman for some specific examples.

\begin{definition}[$W$-Bregman distance]
Let  $W$ be a set of functions $w\colon \XX \to \R $,
$\reg  \colon \XX  \to [0, \infty]$ be a functional and $\signalP \in \XX$.

\begin{enumerate}
\item
$\reg$ is called $W$-convex at $\signalP$ if
$\reg(\signalP) = \sup_{w \in W} \inf_{\signal \in \XX} ( \reg(\signal) - w(\signal) + w(\signalP))$.

\item
Let $\reg$ be  $W$-convex at $\signalP$. The $W$-subdifferential of $\reg$ at $\signalP$ is defined by  $\partial_W \reg(\signalP) \coloneqq \set{w \in W \mid \forall \signal \in \XX \colon
\reg(\signal) \geq \reg(\signalP) + w(\signal) - w(\signalP)}$.
Moreover, the $W$-Bregman distance
with respect to $w \in \partial_W \reg(\signalP)$ between $\signalP$ and $\signal \in \XX$  is defined by
\begin{equation*}
	\breg_{\reg,W}^{w}(\signal, \signalP) \coloneqq \reg(\signal)
	-  \reg(\signalP) - w(\signal) + w(\signalP) \,.
\end{equation*}
\end{enumerate}
\end{definition}

The   notion of $W$-Bregman distance reduces to its
classical counterpart if we take  $W$
as the set of all bounded  linear functionals on $\XX$.
Allowing more general function sets $W$ provides an extension of the
Bregman distance to non-convex functionals that can be used as an alterative
approach to convergence rates. It, however, requires finding a suitable
function set such that $\reg$ is $W$-convex at $\signalP$.

Relations between the absolute Bregman distance and the $W$-Bregman distance
depends on the particular choice of $W$. We illustrate this by an example.

\begin{example}
Consider the functional $\reg(\signal)\coloneqq \bigl(2\operatorname{ReLU}(\signal-\signalP)-1\bigr)\abs{\signal-\signalP}^q$  for $x\in\R$ with $q > 1$. The absolute Bregman distance according to Definition~\ref{def:bregman}
is given by $ \breg_{\reg}(\signal, \signalP) = \abs{\signal-\signalP}^q$.
For the $W$-Bregman distance consider the family $W \coloneqq  \set{ w_{\alpha, \beta} \colon \R \to\R \mid \alpha, \beta \in \R}$ where
$w_{\alpha, \beta}(\signal) \coloneqq \alpha (\signal -\signalP) - \beta\abs{\signal -\signalP}^q $.
Then $\reg$ is locally
convex at $\signalP$ with respect to $W$. Moreover, $w_{\alpha, \beta} \in \partial_W \reg(\signalP)$ if $\alpha = 0$ and $\beta \ge 1$. For $w_{0, \beta} \in \partial_W \reg(\signalP)$, it follows that
\begin{equation*}
\breg_{\reg,W}^{w_{0, \beta}}(\signal, \signalP) =
\begin{cases}
(\beta +1) \abs{\signal-\signalP}^q & \text{ if } \signal \ge \signalP \\
(\beta -1) \abs{\signal-\signalP}^q & \text{ otherwise}.
\end{cases}
\end{equation*}
In case of $q = 2$, the $W$-subdifferential is closely related to the notion of proximal subdifferentiability used in \cite{clarke1997nonsmooth}.

If $\beta > 1$, then $\breg_{\reg,W}^{w_{0, \beta}}(\edot,\signalP)$ and $\breg_{\reg}(\edot,\signalP)$ only differ in terms of the front constants. As a consequence,
the rates with respect to both Bregman distances will be of the same order.
However, if $\beta = 1$, then $\breg_{\reg,W}^{w_{0, 1}}(\edot,\signalP)$ equals $0$ when  $\signal \le \signalP$. In contrast,  $\breg_{\reg}(\edot,\signalP)$ always treats both $\signal \ge \signalP$ and $\signal \le \signalP$ equally.
\end{example}

We conclude that the relation between the absolute Bregman distance and  the
$W$-Bregman distance  depends on the particular situation and the choice of the family $W$.
Using the $W$-Bregman distance for the analysis of NETT and studying relations between the two generalized Bregman  distances are interesting lines of research that we aim to address in future work.
}

\section{\cm A data driven regularizer for NETT}
\label{sec:auto}

In this section we present a framework for constructing  a  trained
neural network regularizer $\reg(\Wo, \edot)$ of the form  \eqref{eq:cnn2}.
Additionally, we develop a strategy for network training and
minimizing the NETT functional.

\begin{psfrags}
\psfrag{r}{regularizer $\nlf(\NN(\Wo,\edot))$ }
\psfrag{e}{encoder $\NNe(\WWe,\edot)$}
\psfrag{d}{decoder $\NNd(\WWd,\edot)$}
\begin{figure}[htb!]
\centering     \includegraphics[width=\textwidth]{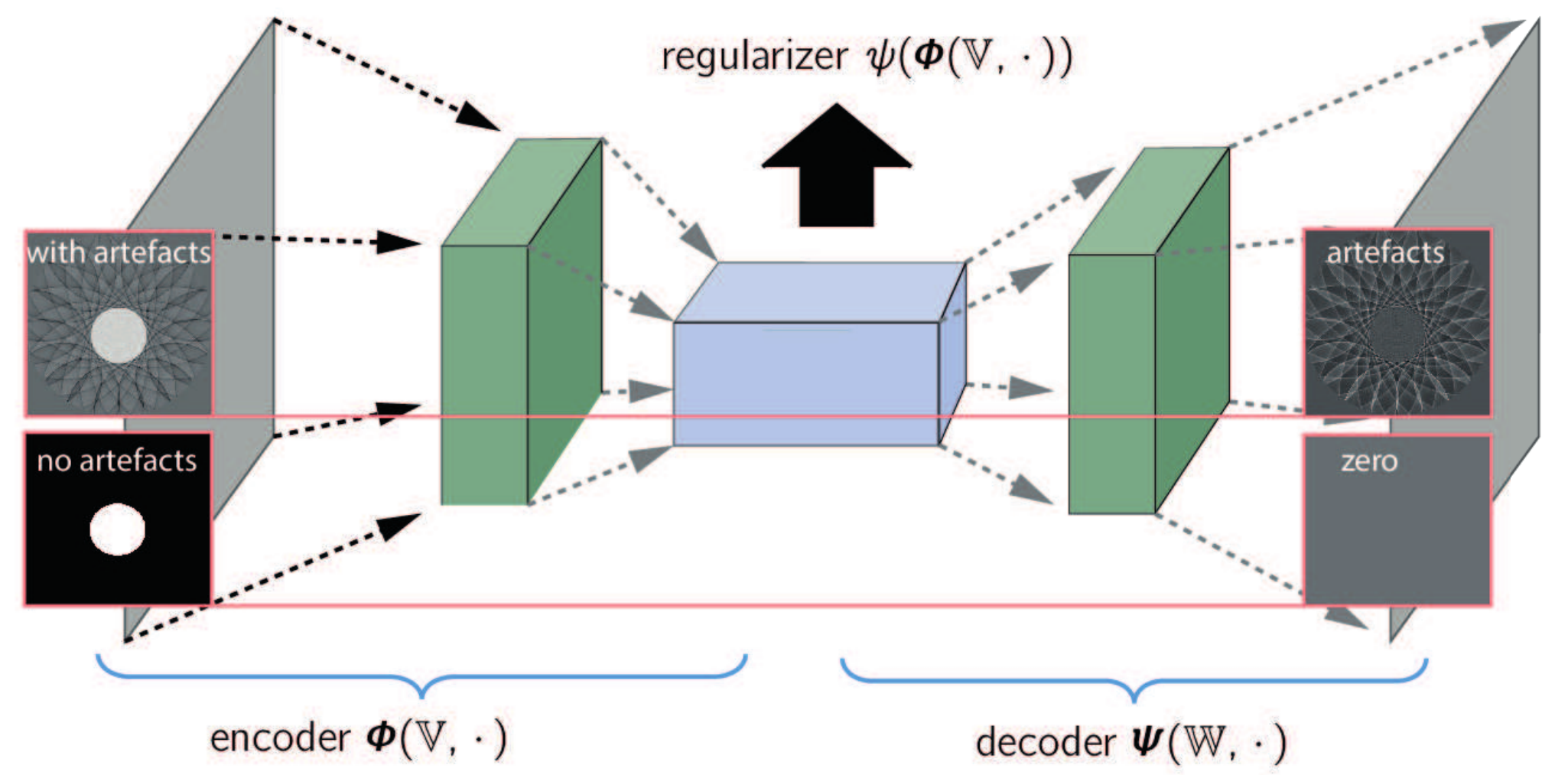}
    \caption{\textbf{Encoder-decoder scheme and proposed training strategy.}\label{fig:aed} The network
    consists of the encoder part $\NNe(\WWe,\edot)$ and decoder part $\NNd(\WWd,\edot)$,
    and is trained to map  any potential solution $\signal$ to the corresponding artifact part. The norm of $\NNe(\WWe,\signal)$
    is used as  trained regularizer.}
    \end{figure}
\end{psfrags}

\subsection{A trained regularizer}
\label{sec:auto2}

For the regularizer we propose $\reg(\Wo,\edot) = \sum_{\lambda\in \Lambda_L} \norm{ \NNe_\lambda(\WWe,\signal)}_q^q $ with a network $\NNe(\WWe,\edot) = (\NNe_\lambda(\WWe,\edot))_{\la \in \La}
$
of the form \eqref{eq:cnn2}, that itself is part  of  encoder-decoder type network
\begin{equation} \label{eq:auto}
\NNd(\WWd, \edot) \circ \NNe(\WWe,\edot) \colon \XX \rightarrow \XX \,.
\end{equation}
Here $\NNe(\WWe,\edot)\colon \XX \to  \XXX_L $ can be interpreted as encoding network  and $\NNd \colon \XXX_L \to  \XX$  as decoding network. {\cm We note, however, that any  network with at least one hidden layer can be written in the form \eqref{eq:auto}.
Moreover we  also allow  $ \XXX_L $ to be of large dimension  in which  case the encoder
$\NNe(\WWe,\edot)$ does not perform any form  of dimensionality reduction or compression.}

Training of the network is performed  such that $\reg(\Wo, \edot)$
is small for artifact free images and large for images
with artifacts. The proposed training strategy is presented
below.

For suitable network training of the encoder-decoder scheme \eqref{eq:auto}, we propose the following strategy (compare Figure \ref{fig:aed}).
We choose a  set of  training  phantoms  $\zzt_\ntrain \in \XX$ for
$\ntrain =1, \dots,  \Ntrain_1 +  \Ntrain_2$ from   which we construct  back-projection images
 $\xxt_\ntrain  \coloneqq  \Fo^\Plus ( \Fo \zzt_\ntrain)$ (where $\Fo^\Plus$ denotes the pseudo-inverse) for the  first $\Ntrain_1$ training examples,
and set $\xxt_\ntrain = \zzt_\ntrain$ for the
last $\Ntrain_1$ training images. From this we define the training data
$\set{(\xxt_\ntrain, \rrt_\ntrain)}_{\ntrain=1}^{\Ntrain_1 + \Ntrain_2}$,
where
\begin{align}\label{eq:z1}
	\rrt_\ntrain &= \zzt_\ntrain - \xxt_\ntrain = \zzt_\ntrain  - \Fo^\Plus (\Fo  \zzt_\ntrain)
	&& \text{ for } \ntrain =1, \dots, \Ntrain_1 \\ \label{eq:z2}
	\rrt_\ntrain &= \zzt_\ntrain - \xxt_\ntrain =0
	&& \text{ for } \ntrain = \Ntrain_1+1, \dots,  \Ntrain_1 + \Ntrain_2  \,.
\end{align}
The free parameters in \eqref{eq:auto} are adjusted in such a way,
that $\NNd(\WWd, \NNe(\WWe, \xxt_\ntrain))  \simeq \rrt_\ntrain$   for any training
pair $(\xxt_\ntrain, \rrt_\ntrain)$.
This is achieved by minimizing the error function
\begin{equation} \label{eq:err}
 	E_N(\WWe,\WWd ) \coloneqq
 	\sum_{n=1}^{\Ntrain_1+\Ntrain_2} \ell(\NNd(\WWd, \NNe(\WWe, \xxt_\ntrain)), \rrt_\ntrain ) \,,
\end{equation}
where $\ell$ is a suitable distance measure  (or loss function) that quantifies the error made  by the network
function on the $\ntrain$-th training sample.
Typical choices for $d$ are mean absolute error or mean squared
error.

Given an arbitrary unknown $\signal \in \XX$, the trained network  estimates the artifact part.  As a consequence,  $\reg(\WWe, \xxt)$ is expected to be
large, if $\xxt$ contains severe artifacts and small if it is almost artifact free. If $\xxt$ is similar to elements in the
training set, this  should produce almost artifact free results with NETT regularization. Even if  the true unknown  is of different type from the training data, artifacts as well as noise will have large value of the regularizer. Thus our approach is applicable for a wider range of images apart from training ones. This claim is confirmed by our numerical results  in Section~\ref{sec:num}.
Note that  we did not explicitly account for the coercivity condition  in \ref{cond:main1}
 during the training phase. {\cm Several possibilities for ensuring coercivity are discussed in Section~\ref{sec:well}. Moreover, note that the class of methods we have in mind for the above training strategy are underdetermined  problems  such as undersampled CT, MRI or PAT. We expect that similar training strategies can be designed for
 problems that have many small but not vanishing singular values.
 Investigating such issues in more detail (theoretically and
numerically) is an interesting line of future research.}

{\cm
\begin{remark}[Alternative trained regularizers]
Another natural choice would be to simply take
$\reg(\WWd,\WWe,\signal) \coloneqq \norm{\NNd(\WWd, \NNe(\WWe,\signal))}^2$
for the regularizer. Such a regularizer has been used in the proceedings
\cite{antholzer2019nett} combined with quite simple network architectures for $\NNd$ and
$ \NNe$. The main emphasis of this paper is the  convergence  analysis of NETT, so the investigation
of effects of different trained regularizers is beyond its scope.
We nevertheless point out that including training data corresponding to \eqref{eq:z2}
makes  the trained network $\NNd(\WWd, \NNe(\WWe,\signal))$ different to standard artifact
removal network \cite{jin2017deep}. Such methods only use training data  corresponding to
\eqref{eq:z2} to  remove artifacts. Detailed investigation of benefits of each approach is an interesting
aspect of future research.
\end{remark}
}

\subsection{Minimizing the NETT functional}
\label{sec:grad}

Using the encoder-decoder scheme, regularized solutions
are defined as minimizers of the NETT functional
\begin{equation} \label{eq:tikU}
\tik_{\alpha;\data} (\signal) =
\frac{1}{2}  \norm{ \Fo (\signal)  - \data_{\delta}}^2 + \alpha
\sum_{\lambda\in \Lambda_L} \norm{ \NNe_\lambda(\WWe,\signal)}_q^q \,,
\end{equation}
where $\NNe$ is trained as  above. The optimization
problem \eqref{eq:tikU} is non-convex (due to the presence of the non-linear network) and non-smooth if $q=1$. Note that  the  subgradient
of the regularization term $\reg(\Wo,\signal) = \sum_{\lambda\in \Lambda_L} \norm{ \NNe(\WWe,\signal)_\lambda}_q^q$
can be evaluated  by standard software for network training with the backpropagation algorithm.
We therefore propose  to use an incremental gradient method  for minimizing  the
Tikhonov functional \eqref{eq:tikU}, which alternates  between a gradient descent  step for
$\frac{1}{2}  \norm{ \Fo (\signal)  - \data_{\delta}}^2$ and a
subgradient descent  step  for the regularizer $\reg(\Wo,\signal) $.

The resulting minimization procedure is summarized
in Algorithm~\ref{alg:gradient}.
\begin{algorithm}
\caption{Incremental\label{alg:gradient} gradient descent  for minimizing NETT}
\begin{algorithmic}[H]
\STATE {Choose family of step-sizes $(s_i)>0$}
\STATE {Choose initial iterate $\signal_0 $}
\FOR{ i = 1 to \texttt{maxiter}}
	\STATE{$\bar \signal_i \gets \signal_{i-1} - s_i \Fo'(\signal_{i-1})^\ast(\Fo (\signal_{i-1})-\data^{\delta})$}
	\COMMENT{gradient step  for $\frac{1}{2}  \norm{ \Fo (\signal)  - \data_{\delta}}^2$}
	\STATE{$\signal_i \gets \bar\signal_i - s_i \alpha \nabla_{\signal} \reg(\Wo, \edot)(\bar \signal_i)$}
	\COMMENT{gradient step  for $\reg(\Wo, \edot)$}
\ENDFOR
\end{algorithmic}
\end{algorithm}
	
In practice, we found that Algorithm~\ref{alg:gradient} gives favorable performance, and is stable with respect to tuning parameters.
Also other algorithms such as proximal gradient methods \cite{combettes2011proximal}
or Newton type methods might be used for the minimization of \eqref{eq:tikU}.
A detailed comparison  with other  algorithms is beyond the scope of this article.

Note that the regularizer  may be taken $ \reg(\Wo,\signal)  =
\norm{\NNe(\signal)}_L$ with an arbitrary norm $\enorm_L$ on $\XXX_L$.
The concrete training  procedure is described  below.
In  the  form  \eqref{eq:tikU}, NETT constitutes   a {\cm non-linear} generalization of $\ell^q$-regularization.

\section{Application to  sparse data tomography}
\label{sec:num}

As a demonstration, we use NETT regularization with the
 encoder-decoder scheme presented in Section~\ref{sec:auto}  to the sparse data problem
in photoacoustic tomography (PAT).
PAT is an emerging hybrid imaging method based on the conversion of light in sound,
and   beneficially  combines the high  contrast of optical imaging with the good resolution of ultrasound tomography (see, for example, \cite{kuchment2008mathematics,paltauf2007photoacoustic,wang2011photoacoustic,Wan09b}).

\subsection{Sparse sampling problem in PAT}
\label{sec:pat}

The  aim of PAT is to recover the initial pressure $\source \colon \R^d \to \R $
in the   wave equation
\begin{equation} \label{eq:wave-fwd}
	\left\{ \begin{aligned}
	&\partial_t^2  p (x,t) - \Delta_x p(x,t)
	=
	0 \,,
	 && \text{ for }
	\kl{x,t} \in
	\R^d \times \kl{0, \infty} \,,
	\\
	&p\kl{x,0}
	=
	\source (x)  \,,
	&& \text{ for }
	x  \in \R^d \,,
	\\
	&\partial_t
	p\kl{x,0}
	=0 \,,
	&& \text{ for }
	x  \in \R^d \,.
\end{aligned} \right.
\end{equation}
form measurements  of $p$ made on an observation surface $S$ outside the
support  of $\source$. Here $d$  is the spatial dimension, $\Delta_x$ the spatial Laplacian, and
$\partial_t$ the derivative with respect to the time variable $t$.
Both cases $d  =2,3 $  for the   spatial dimension are relevant in PAT:
The case $d=3$ corresponds to classical point-wise measurements; the case $d=2$
to integrating  line detectors~\cite{burgholzer2007temporal,paltauf2007photoacoustic}.
In this paper we consider the case of $d=3$ and
 assume the initial pressure $\source \colon \R^2 \to \R $  vanishes outside the unit disc $D_1$,
 the ball of radius $1$, and that acoustic data are collected at the boundary sphere $\sph^1 = \partial D_1$.
In particular, we are interested in  the  sparse sampling  case,
where    data are only given for a small number of sensor
locations on $\sph^1$. This is the case that one often faces in practical applications.

In the full sampling case,  the discrete PAT forward operator is  written
as $\wave \colon \R^{n_1 \times n_2} \to  \R^{m_{\rm full} \times m_2}$ where
$ m_{\rm full}$ corresponds to the number of complete spatial sampling points
and $M_2$ to the number of temporal sampling points.
Sufficient sampling conditions for PAT in the circular geometry have been derived
in \cite{haltmeier2016sampling}. We discretize the  exact inversion formula
of \cite{finch2007inversion} to obtain an approximation $\wave^\sharp   \colon  \R^{m_{\rm full} \times m_2}  \to \R^{n_1 \times n_2}$
to the inverse of $\Fo$.
In the full data case, application of $\wave^\sharp$ to data $\wave \signal \in
\R^{m_{\rm full} \times m_2}$ gives an almost artifact free reconstruction
$\signal \in \R^{n_1 \times n_2}$, see~\cite{haltmeier2016sampling}.
Note that  $\wave^\sharp$ is the discretization  of the continuous adjoint of
$\wave$ with respect  to a weighted  $L^2$-inner product
(see~\cite{finch2007inversion}).

In the sparse sampling case, the PAT forward operator is given by
\begin{equation} \label{eq:pat-s}
	\Fo = \samp \circ \wave \colon \XX =
	\R^{n_1 \times n_2} \to  \R^{m_1 \times m_2}
	\,.
\end{equation}
Here $\samp \colon \R^{m_{\rm full} \times m_2} \to
\R^{m_1 \times m_2}$ is the subsampling operator,  which restricts the  full
data in  to a  small number of spatial  sampling points.
In the case of spatial under-sampling,   the filtered backprojection (FBP)reconstruction
$\Fo^\sharp \coloneqq   \wave^\sharp  \circ \samp^\trans$ yields
typical streak-like under-sampling artifacts
(see, for example, the examples in Figures~\ref{fig:SLphant}).

\begin{figure}[htb!]%
    \includegraphics[width=\textwidth]{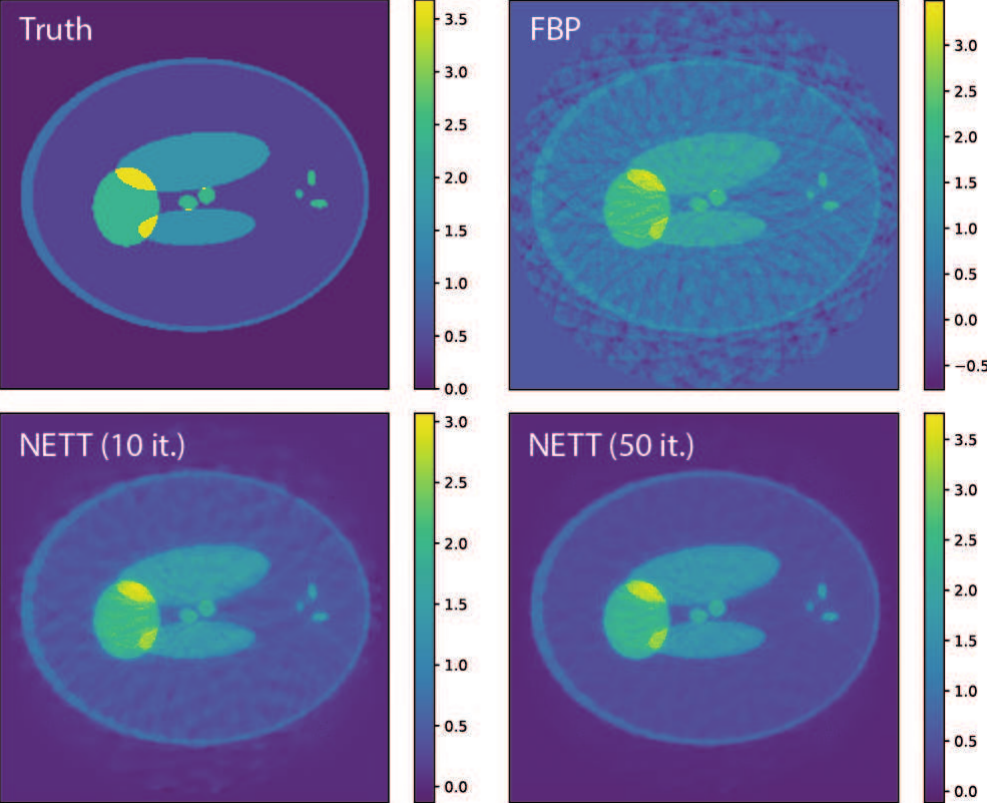}
    \caption{\textbf{Reconstruction results for a  phantom of Shepp-Logan type.}
    Top: \label{fig:SLphant} Phantom $\signal$ (left)
    and corresponding   FBP reconstruction (right);
    Bottom: Iterates $\signal_{10}$ (left) and $\signal_{50}$ (right)
    with  the proposed algorithm for minimizing the NETT functional.}%
\end{figure}

\subsection{Implementation details}
\label{sec:details}

Consider NETT  where the regularizer is defined by the
encoder-decoder framework
described in Section~\ref{sec:auto}.
The network  $\NNd(\WWd,\edot ) \circ \NNe(\WWe,\edot )$  is taken as the
Unet, where the $\NNe(\WWe,\signal)$ corresponds to the output of the
bottom layer with smallest image size and largest depth. The Unet has been proposed in \cite{ronneberger2015unet} for image segmentation and successfully applied  to PAT in  \cite{antholzer2017deep,schwab2018dalnet}.
However, we point out, that
any  network that has the encoder-decoder of the form $\NNd(\WWd,\edot ) \circ \NNe(\WWe,\edot )$ can be used in an analogous manner.

The  network was trained on a set of training
pairs  $\set{(\xxt_\ntrain, \rrt_\ntrain)}_{\ntrain=1}^{\Ntrain_1+\Ntrain_2}$,
with $\Ntrain_1 = \Ntrain_2 = 975$,
 where exactly half of them contained  under-sampling artifacts.
 For generating such training data we used \eqref{eq:z1}, \eqref{eq:z2}   where
 $\zzt_n$ are taken as randomly generated piecewise constant Shepp-Logan type phantoms. The Shepp-Logan type phantoms have position, angle, shape and intensity of every ellipse chosen uniformly at random  under the side constraints that the
 support of every ellipse  lies inside the unit disc and the intensity of the phantom
 is in the range $[0,6]$.
 {\cm During training we had no problems with overfitting or instability and thus we did not use dropout or batch normalization. However, such techniques might be needed for different networks or training sets.}

In this discrete sparse sampling case, we take the forward operator
$\Fo$ as in  \eqref{eq:pat-s} with  $n_1= n_2 = 256$  and $m_1 = 30$
 spatial samples distributed  equidistantly  on the boundary circle.
 We used $m_2 = 2000$ times sampled evenly in the  interval $[0, 2.5]$. The under-sampling problem in PAT
 is solved by FBP, and NETT regularization using $\alpha = 1/4$.   We minimize~\eqref{eq:tikU}
  using  Algorithm~\ref{alg:gradient}, where
  we chose a constant step size of $s_i = 0.4$ and take the zero image
  $\signal_0 = 0$ for the initial guess.
{\cm These parameters have been selected by hand using similar phantoms as reference.}

\begin{figure}[htb!]%
    \includegraphics[width=\textwidth]{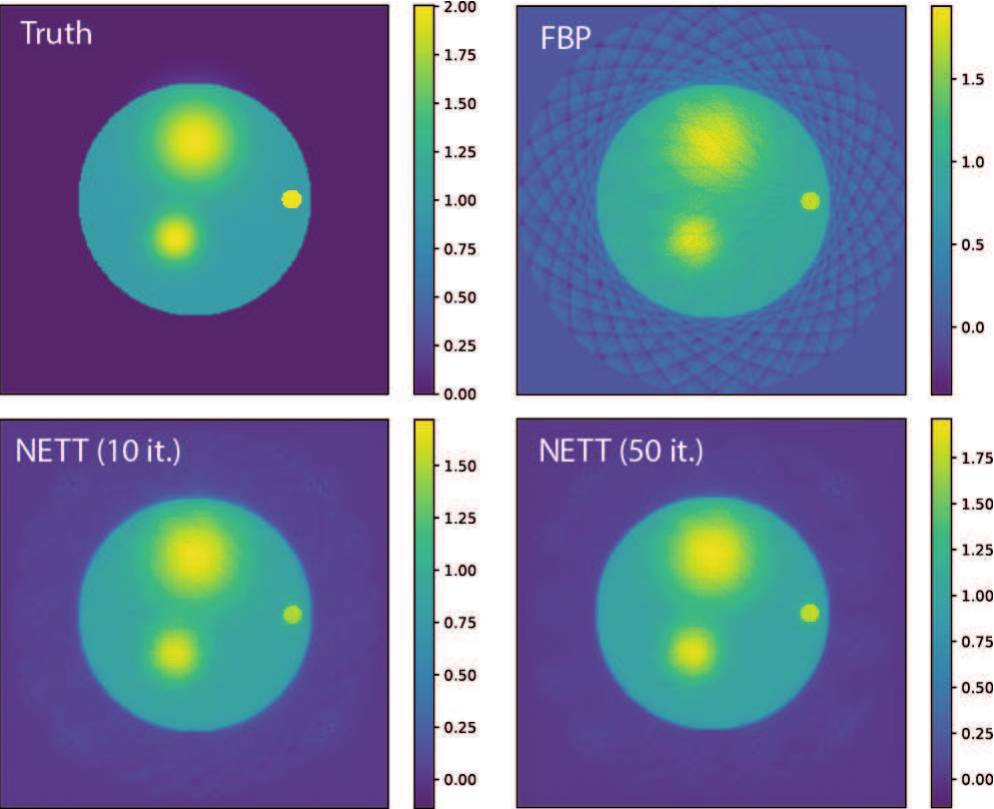}
    \centering
    \caption{\textbf{Reconstruction results for a phantom of different type from
    training  data.}
    Top: \label{fig:SMOOTHphant} Phantom $\signal$
    with smooth blobs (left) and corresponding FBP reconstruction (right);
    Bottom: Iterates  $\signal_{10}$ (left)
    and $\signal_{50}$ (right) with  the proposed algorithm
    for minimizing the NETT functional.}%
\end{figure}

\begin{figure}[htb!]%
    \centering
    \includegraphics[width=\textwidth]{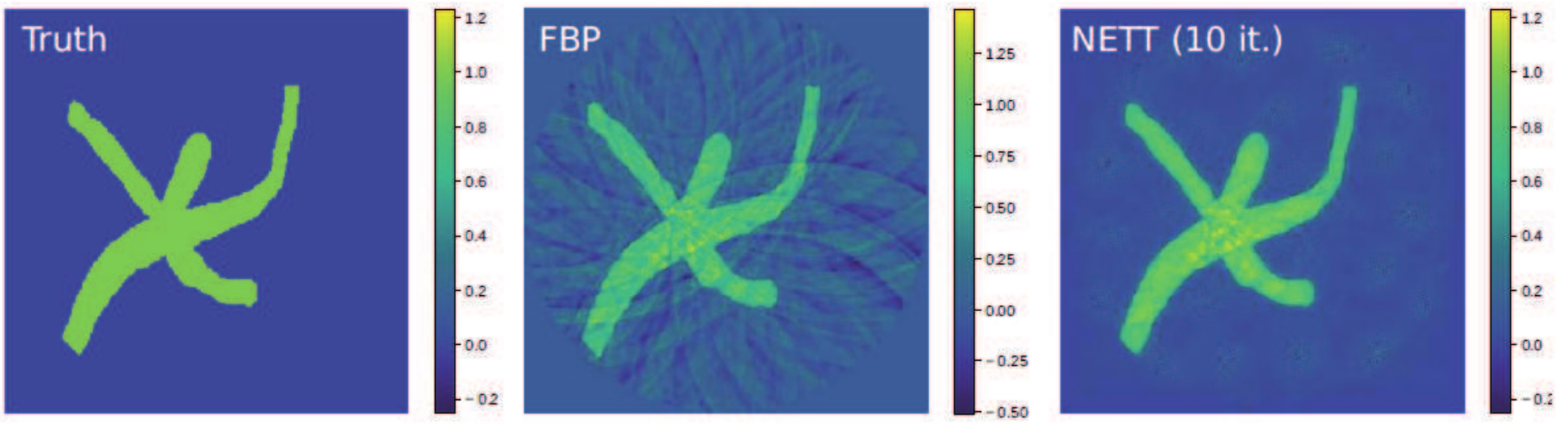}
    \caption{\cm \textbf{Reconstruction results for another phantom of different type from training data.} Left: Phantom $\signal$ with elongated structures. Middle:  Corresponding FBP reconstruction. Right: Result using NETT with 10 iterations.}%
    \label{fig:X_phantom}
\end{figure}

\subsection{Results and discussion}
\label{sec:results}

The top left image in Figure~\ref{fig:SLphant} shows a Shepp-Logan type phantom
$\signal \in \R^{256\times 256}$ corresponding to a function on the domain $[-1,1]^2$.
It is of the same type as the training data, but is not contained in the training data.  The NETT  reconstruction  $\signal_{10}$ and $\signal_{50}$ with Algorithm~\ref{alg:gradient} after  10 and 50 iterations for the Shepp-Logan type
phantom are shown in the bottom row of Figure \ref{fig:SLphant}.
The top right image  Figure \ref{fig:SLphant} shows the reconstruction
$\signal_{\rm FBP} = \Fo^\sharp \Fo  \signal $ with the
FBP algorithm of \cite{finch2007inversion}.
The relative $L^2$-errors $E(\zsignal) \coloneqq \snorm{\signal-\zsignal}_2/ \snorm{\signal}_2$  of the iterates for the Shepp-Logan type phantom
are $E(\signal_{10})=0.262$ and $E(\signal_{50})=0.192$, whereas the relative  error of  the FBP reconstruction
is $E( \signal_{\rm FBP} )=0.338$.
From Figure~\ref{fig:SLphant}  one recognizes that NETT  is able to well remove under-sampling artifacts while preserving high resolution
information.

We  also consider a phantom  image (blobs phantom)
that additionally includes smooth parts and is of different type from the
phantoms used for training.
The blobs phantom  as well as the   FBP reconstruction $\signal_{\rm FBP}$
and  NETT reconstructions $\signal_{10}$ and $\signal_{50}$ are shown in Figure~\ref{fig:SMOOTHphant}.
For the blobs phantom, the relative reconstruction errors are given by $E(\signal_{10})=0.176$,
$E(\signal_{50})=0.102$ and
$E(\signal_{\rm FBP}) = 0.179$.  Again, for this phantom different
from the training set, NETT removes under-sampling artifacts and
at the same time preserves high resolution.
{\cm Results for another phantom of different type from training data are shown in Figure~\ref{fig:X_phantom}.}

\begin{figure}[htb!]%
    \includegraphics[width=\textwidth]{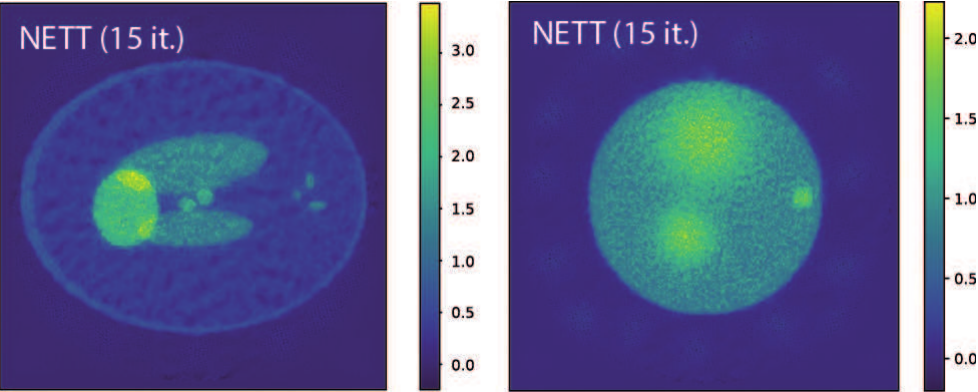}
    \centering
    \caption{\textbf{Reconstruction results from noisy data using
    NETT with 15 iterations.}\label{fig:noisy}
    Left:  Shop Logan phantom;
    Right: Blobs phantom.}%
\end{figure}

Finally, Figure \ref{fig:noisy} shows reconstruction results  with NETT from noisy data
where we added \SI{5}{\percent} additive Gaussian noise to the the data.
We performed 15 iterations with Algorithm~\ref{alg:gradient}. The relative reconstruction errors are
$E(\signal_{15}) = 0.280$ for the Shepp Logan phantom and
$E(\signal_{15}) = 0.210$ for the blobs phantom.
Parameters have been taken  as in the noiseless data case.
{\cm We also calculated the average relative $L^2$-error and average structured similarity index (SSIM) of \cite{wang2004image} on a test set of 100 phantoms, which were similar to the training set. The errors for both the noiseless and noisy case can be seen in Table~\ref{tab:errors}.
We used the same parameters as above for both the noiseless and the noisy case.
}

{\cm
\begin{table}
    \centering
    \begin{tabular}{|c|c|c|}
    &with noise & without noise \\
    \hline % \midrule geht nicht?
    relative $L^2$ & 0.20187 &  0.16959 \\
    SSIM &   0.35151 & 0.37066
    \end{tabular}
    \caption{Average errors on the test set of the NETT reconstruction after 20 iterations. For the noisy case we show the error for the iteration with minimal $L^2$ error.}
    \label{tab:errors}
\end{table}
}

In both cases, the  reconstructions are free from under-sampling artifacts and contain high frequency information, which demonstrates  the applicability of NETT for noisy data as well.

The above results demonstrate the   proposed NETT regularization using the encoder-decoder framework  and with Algorithm~\ref{alg:gradient} for minimization  is able to remove under-sampling artifacts. {\cm It also gives consistent} results even on images with smooth structures not contained in the training data. This shows that in the NETT framework, learning the regularization functional on one class of training data, can lead to good results even for images beyond that class.

\section{Conclusion and outlook}
\label{sec:conclusion}

In this paper we developed a new framework for the solution of
inverse problems via NETT \eqref{eq:nett}.
We presented a complete convergence analysis
and  derived   well-posedness and weak convergence
(Theorem~\ref{thm:well}), norm-convergence (Theorem~\ref{thm:strong}),
 as well as  various convergence rates results (see Section~\ref{sec:rates}).
 {\cm For these results we introduced the absolute Bregman distance  as a new generalization  of the   standard Bregman distance from the convex to the non-convex setting.}  NETT combines deep neural networks with a Tikhonov regularization
 strategy.
 The regularizer is defined by a network that might be a  user-specified function
 (generalizing  frame based regularization), or might be a
 CNN trained on an appropriate training data set.
 We have developed a possible strategy for learning a deep CNN (using an encoder-decoder framework, see
 Section~\ref{sec:auto}).
  Initial numerical results for a  sparse data  problem in  PAT  (see Section~\ref{sec:num}) demonstrated
  that  NETT  with the trained   regularizer  works well and also yields good results  for phantoms different from the class of training data. This may be a result of the fact, that  opposed to other deep learning approaches for image reconstruction,  the NETT includes  a data consistency term as well as the trained network that focuses on identifying artifacts.
  Detailed comparison with  other  deep learning methods for inverse problems as well as   variational regularization methods  (including TV-minimization) is subject of  future studies.

Many possible lines of future research arise from  the proposed
NETT regularization  and the corresponding network-minimizing solution concept
\eqref{eq:RminW}. For example, instead of the  Tikhonov variant  \eqref{eq:nett}
 one can  employ
and analyze the residual method (or Ivanov regularization) for approximating \eqref{eq:RminW}, see \cite{GraHalSch11b}.
Instead of the simple incremental gradient descent algorithm
(cf. Algorithm~\ref{alg:gradient}) for minimizing  NETT  one could investigate  different
algorithms  such as proximal gradient  or  semi-smooth Newton methods.
Studying  network designs and training strategies different from the encoder-decoder
scheme is  a promising  aspect of future studies.   Finally, application
of NETT to other inverse  problems is another interesting   research direction.

\section*{Acknowledgement}

SA and MH  acknowledge support of the Austrian Science Fund (FWF), project P 30747.
The work of HL  has been support through the DFG Cluster of Excellence Multiscale Bioimaging EXC 2067.

\end{document}